\theoremstyle{plain}
\newtheorem{theorem}{Theorem}[section]      
\newtheorem{proposition}[theorem]{Proposition}    
\newtheorem{lemma}[theorem]{Lemma}            
\theoremstyle{definition}               
\newtheorem{definition}[theorem]{Definition}  
\theoremstyle{remark}                   
\newtheorem{remark}[theorem]{Remark}
\newcommand{\re}{\mathbb{R}}
\numberwithin{equation}{section}
\begin{document}

\title[Energy estimates for the half-Laplacian]
{Energy estimates and 1-D symmetry for nonlinear
equations involving the half-Laplacian}

\thanks{Both authors were supported by grants
MTM2008-06349-C03-01 (Spain) and 2009SGR-345 (Catalunya). The second author was partially supported by University of Bologna (Italy), funds for selected research topics.}
\author[Xavier Cabr\'e]{Xavier Cabr\'e}
\address{ICREA and
Universitat Polit\`ecnica de Catalunya, Diagonal 647 \\
Departament de Mate-\-m\`atica Aplicada 1 \\
08028 Barcelona (Spain) } \email{xavier.cabre@upc.edu}
\author[Eleonora Cinti]{Eleonora Cinti}
\address{
Universit\`a di Bologna\\
Dipartimento di Matematica\\
Piazza di Porta San Donato 5\\
40126 Bologna (Italy) and Universitat Polit\`ecnica de Catalunya, Diagonal 647 \\
Departament de Matem\`atica Aplicada 1\\
08028 Barcelona (Spain) } \email{eleonora.cinti@upc.edu}

\subjclass[2000]{Primary: 35J60, 35R10; Secondary: 35B05, 35J20, 60J75.}
 \keywords{Half-Laplacian, energy estimates, symmetry properties, entire solutions.}

\begin{abstract}
We establish sharp energy estimates for some solutions, such as
global minimizers, monotone solutions and saddle-shaped solutions,
of the fractional nonlinear equation $(-\Delta)^{1/2} u=f(u)$ in $\re^n$. Our energy 
estimates hold for every nonlinearity $f$ and are sharp since they are
optimal for one-dimensional solutions, that is, for solutions
depending only on one Euclidian variable.

As a consequence, in dimension $n=3$, we deduce the
one-dimensional symmetry of every global minimizer and of every
monotone solution. This result is the analog of a conjecture of De
Giorgi on one-dimensional symmetry for the classical equation
$-\Delta u=f(u)$ in $\re^n$.
\end{abstract}

\maketitle

\section{Introduction and results}
In this paper we establish sharp energy estimates for some solutions of
the fractional nonlinear equation
\begin{equation} \label{eq1}
 (-\Delta)^{1/2} u=f(u)\quad \mbox{in}\;\re^n,\end{equation}
where $f:\re\rightarrow\re$ is a $C^{1,\beta}$ function with
$0<\beta<1$. 
When $f$ is a balanced bistable nonlinearity, for instance when
$f(u)=u-u^3$, we call
equation (\ref{eq1}) of Allen-Cahn type by the analogy with the
corresponding equation involving the Laplacian instead of the
half-Laplacian,
\begin{equation}\label{eq-degiorgi}
-\Delta u=u-u^3\quad \mbox{in}\;\re^n.\end{equation}

In 1978 De Giorgi conjectured that the level sets of every
bounded solution of
(\ref{eq-degiorgi}) which is monotone in one direction, must be hyperplanes, at least if $n\leq 8$.
That is, such solutions depend only on one Euclidian variable. The
conjecture has been proven to be true in dimension $n=2$ by
Ghoussoub and Gui \cite{GG} and in dimension $n=3$ by Ambrosio and
the first author \cite{AC}. For $4\leq n\leq 8$, if $\partial_{x_n}u>0$,
and assuming the additional condition
$$\lim_{x_n \rightarrow \pm \infty}u(x',x_n)=\pm 1\quad \mbox{for all}\;x'\in \re^{n-1},$$ it has been
established by Savin \cite{S}. Recently a counterexample to the
conjecture for $n\geq 9$ has been announced by del Pino, Kowalczyk
and Wei \cite{dPKW}.

In this paper (see Theorem \ref{degiorgi} below), we establish the
one-dimensional symmetry of bounded monotone solutions of (\ref{eq1}) in
dimension $n=3$, that is, the analog of the conjecture of De Giorgi
for the half-Laplacian in dimension 3. We recall that
one-dimensional (or 1-D) symmetry for bounded stable solutions of
(\ref{eq1}) in dimension $n=2$ has been proven by the first author and
Sol\`a-Morales \cite{C-SM}. The same result in dimension $n=2$ for
the other fractional powers of the Laplacian, i.e., for the
equation
$$(-\Delta)^s u=f(u)\quad \mbox{in}\;\re^2,\quad \mbox{with}\;0<s<1,$$ has been established by the first author and Sire \cite{C-Si1, C-Si2} and by Sire and Valdinoci
\cite{SV}.

A crucial ingredient in our proof of 1-D symmetry in $\re^3$ is a sharp
energy estimate for global minimizers and for monotone solutions, that we state in Theorems
\ref{energy-est} and \ref{energy-dim3} below. It is interesting to note that our method to prove the energy estimate
also applies to the case of saddle-shaped solutions in $\re^{2m}.$ These solutions are not
global minimizers in general (this is indeed the case in dimensions $2m\leq 6$ by a result of the second author \cite{Cinti}), but they are
minimizers under perturbations
vanishing on a suitable subset of $\re^{2m}$. We treat these solutions and their corresponding energy estimate at the end of this introduction.

To study the nonlocal problem (\ref{eq1}) we realize it as
a local problem in $\re^{n+1}_+$ with a nonlinear Neumann
condition on $\partial \re_{+}^{n+1}=\re^n$. More precisely, if $u=u(x)$ is a function defined on
$\re^n$, we consider its harmonic extension $v=v(x,\lambda)$
in $\re^{n+1}_+=\re^n\times(0,+\infty)$. It is well known (see \cite{C-SM,CS}) that $u$ is a solution of
(\ref{eq1}) if and only if $v$ satisfies
\begin{equation}\label{eq2}
\begin{cases}
\Delta v=0& \text{in}\; \re_{+}^{n+1},\\
- \partial_{ \lambda}v=f(v)& \text{on}\; \re^{n}=\partial
\re_{+}^{n+1}.
\end{cases}
\end{equation}

Problem (\ref{eq2}) allows to introduce the notions of \textit{energy}
 and \textit{global minimality} for a solution $u$ of problem (\ref{eq1}).
Consider the cylinder
$$C_R=B_R \times (0,R)\subset \re^{n+1}_+,$$ where $B_R$ is the ball of radius $R$ centered at 0 
in $\re^n$.
We consider the energy functional
\begin{equation}\label{energia}
{\mathcal E}_{C_R}(v)=\int_{C_R}\frac{1}{2}|\nabla v|^2dx d\lambda
+ \int_{B_R}G(v)dx, \end{equation}
whose Euler-Lagrange equation is problem \eqref{eq2}. The potential $G$, defined up to an additive constant, is given by
$$G(v)=\int_v^1 f(t)dt.$$

Using the energy functional \eqref{energia}, we introduce the notions of \textit{global minimizer} and of \textit{layer solution} of \eqref{eq1}. We call layer solutions of \eqref{eq1} those bounded solutions that are monotone increasing, say from $-1$ to $1$, in one of the $x$-variables. After rotation, we can suppose that the direction of monotonicity is the $x_n$-direction, as in point c) of the following definition.
\begin{definition}\label{layer}
a) We say that a bounded $C^1(\overline{\re_+^{n+1}})$
function $v$ is a {\it global minimizer} of $(\ref{eq2})$ if, for
all $R>0$,
$${\mathcal E}_{C_R}(v)\leq {\mathcal E}_{C_R}(w)$$ for
every $C^{1}(\overline{\re_+^{n+1}})$ function $w$ such that
$v\equiv w$ in $\re_+^{n+1}\setminus \overline{C_R}$. 

b) We say that a bounded $C^1$ function $u$ in $\re^{n}$ is a {\it
global minimizer} of $(\ref{eq1})$ if its harmonic extension $v$
is a global minimizer of (\ref{eq2}). 

c) We say that a
bounded function $u$ is a \emph{layer solution} of
(\ref{eq1}) if $\partial_{x_n}u>0$ in $\re^n$ and
\begin{equation}\label{hp-limit}\lim_{x_n \rightarrow \pm
\infty}u(x',x_n)=\pm 1\quad \mbox{for every}\:x'\in
\re^{n-1}.\end{equation}
\end{definition}

Note that the functions $w$ in point a) of Definition \ref{layer} need to agree with the solution $v$ on the lateral boundary and on the top of the cylinder $C_R$, but not on 
its bottom. Since it will be useful in the sequel, we denote the lateral and top parts
of the boundary of $C_R$ by
$$\partial^+C_R=\partial C_R \cap \{\lambda>0\}.$$ 

In some references, global minimizers are called ``local
minimizers'', where local stands for the fact that the energy is
computed in bounded domains.

We recall that every layer solution is a global minimizer
(see Theorem 1.4 in \cite{C-SM}). 

Our main result is the following energy estimate for global
minimizers of problem (\ref{eq1}). Given a bounded function $u$
defined on $\re^n$, set 
\begin{equation}\label{c_u}
c_u=\min\{G(s):\inf_{\re^n} u\leq s\leq \sup_{\re^n}
u\}.\end{equation}

\begin{theorem}\label{energy-est}
Let $f$ be any $C^{1,\beta}$ nonlinearity, with $\beta\in
(0,1)$, and $u\in L^{\infty}(\re^{n})$ be a global minimizer of
(\ref{eq1}). Let $v$ be the harmonic extension of $u$ in
$\re^{n+1}_+$.

Then, for all $R>2$,
\begin{equation}\label{energy}
\int_{C_R}\frac{1}{2}|\nabla v|^2dx
d\lambda+\int_{B_R}\{G(u)-c_u\}dx \leq CR^{n-1}\log
R,\end{equation} where $c_u$ is defined by (\ref{c_u}) and $C$ is
a constant depending only on $n$, $||f||_{C^1([\inf u,\sup u])}$, and $||u||_{L^\infty(\re^n)}$.
In particular, we have that
\begin{equation}\label{kinetic}
\int_{C_R}\frac{1}{2}|\nabla v|^2dx d\lambda \leq CR^{n-1}\log
R.\end{equation}
\end{theorem}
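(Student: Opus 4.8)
The plan is to combine the global minimality of $v$ with one explicit competitor. By Definition \ref{layer}(a), for every $R>0$ we have ${\mathcal E}_{C_R}(v)\le{\mathcal E}_{C_R}(w)$ whenever $w\in C^1(\overline{\re_+^{n+1}})$ and $w\equiv v$ on $\re_+^{n+1}\setminus\overline{C_R}$; since the left-hand side of \eqref{energy} is precisely ${\mathcal E}_{C_R}(v)-c_u\,|B_R|$, it is enough to produce one admissible $w$ with ${\mathcal E}_{C_R}(w)-c_u|B_R|\le CR^{n-1}\log R$. Pick $\xi\in[\inf u,\sup u]$ realizing the minimum in \eqref{c_u}, so that $G(\xi)=c_u$, and look for $w=(1-\varphi)v+\varphi\,\xi$ with $\varphi$ a cut-off on $C_R$, equal to $1$ in the interior and to $0$ on $\partial^+C_R$ (so that $w\equiv v$ off $\overline{C_R}$, as required). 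On $\{\varphi=1\}$ one has $\nabla w=0$, and on the base also $G(w)=G(\xi)=c_u$, so only the transition set $\{\varphi<1\}$ will contribute.

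A preliminary pointwise estimate is needed. By the regularity theory for \eqref{eq1}--\eqref{eq2} in \cite{C-SM,CS}, a bounded solution has $u\in C^{1,\alpha}(\re^n)$ and its extension $v\in C^{1}(\overline{\re_+^{n+1}})$, with norms controlled by $n$, $\|f\|_{C^1([\inf u,\sup u])}$ and $\|u\|_{L^\infty}$; in particular $\nabla v$ is bounded on $\overline{\re_+^{n+1}}$ (indeed $\nabla_xv(\cdot,\lambda)=P_\lambda*\nabla_xu$). Combining this with the interior gradient estimate for harmonic functions applied on the balls of radius $\lambda/2\subset\re_+^{n+1}$ yields
\begin{equation*}
|\nabla v(x,\lambda)|\ \le\ \frac{C}{1+\lambda}\qquad\text{on }\ \overline{\re_+^{n+1}} .
\end{equation*}
This decay in $\lambda$ is what will let a transition collar touching the lateral boundary $\{|x|=R\}\times(0,R)$ — whose $(n+1)$-volume is of order $R^{n}$ — contribute only of order $R^{n-1}\log R$.

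The key point is the design of $\varphi$ as a collar around $\partial^+C_R$ whose width is \emph{not uniform}: of size $O(1)$ on the base $B_R\times\{0\}$ and growing to $O(R/\log R)$ away from it. Concretely, near the lateral boundary set $\varphi(x,\lambda)=\chi\big((R-|x|)/g(\lambda)\big)$, with a fixed $C^1$ profile $\chi$ vanishing for non-positive argument and $\equiv1$ for argument $\ge1$, and
\begin{equation*}
g(\lambda)=\min\{\,1+\lambda,\ R/\log R\,\},
\end{equation*}
together with an analogous transition of width $\sim R/\log R$ near the top $B_R\times\{R\}$. Then $|\nabla\varphi|\le C/g(\lambda)$ on the support $\{R-g(\lambda)<|x|<R\}$ (using $|g'|\le1$), whose slice at height $\lambda$ has $n$-measure $\sim g(\lambda)R^{n-1}$, so that
\begin{equation*}
\int_{C_R}|\nabla\varphi|^2\ \le\ CR^{n-1}\!\int_0^R\!\frac{d\lambda}{g(\lambda)}\ \le\ CR^{n-1}\log R ,
\end{equation*}
\begin{equation*}
\int_{C_R}(1-\varphi)^2|\nabla v|^2\ \le\ CR^{n-1}\!\int_0^R\!\frac{g(\lambda)}{(1+\lambda)^2}\,d\lambda\ \le\ CR^{n-1}\!\int_0^R\!\frac{d\lambda}{1+\lambda}\ =\ CR^{n-1}\log R ,
\end{equation*}
the second using $(1-\varphi)^2\le\mathbf 1_{\{R-g(\lambda)<|x|<R\}}$, the gradient decay, and $g(\lambda)\le1+\lambda$. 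Since $|\nabla w|^2\le 2(1-\varphi)^2|\nabla v|^2+2|v-\xi|^2|\nabla\varphi|^2$ and $|v-\xi|\le 2\|u\|_{L^\infty}$, this bounds $\int_{C_R}\tfrac12|\nabla w|^2$ by $CR^{n-1}\log R$. Finally, $g(0)=1$, so on the base $w=\xi$ outside the annulus $\{R-1<|x|<R\}$, where $w$ takes values in $[\inf u,\sup u]$; hence $\int_{B_R}\{G(w)-c_u\}\le C\,|B_R\setminus B_{R-1}|\le CR^{n-1}$. Adding up and invoking minimality gives \eqref{energy}, and \eqref{kinetic} then follows because $G(u)-c_u\ge0$ on $B_R$ by \eqref{c_u}.

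The step I expect to be the genuine obstacle is exactly this balancing act between the two terms of \eqref{energia}: the Dirichlet part lives in the $(n+1)$-dimensional cylinder $C_R$ and would like the transition to be as wide as $\sim R$, whereas the potential part lives only on the $n$-dimensional base $B_R\times\{0\}$ and forces the transition there to be $O(1)$ wide; one must widen the collar as one leaves $\{\lambda=0\}$ without overspending on $\int|\nabla\varphi|^2$, and the logarithmic loss is produced precisely by the identity $\int_0^R d\lambda/g(\lambda)\asymp\log R$. The same computation applied to a one-dimensional layer — whose harmonic extension has $|\nabla v|\asymp(1+\lambda)^{-1}$ near the interface — shows that the factor $R^{n-1}\log R$ cannot be improved.
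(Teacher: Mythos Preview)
Your argument is correct. The competitor $w=(1-\varphi)v+\varphi\,\xi$ with the variable--width collar $g(\lambda)=\min\{1+\lambda,R/\log R\}$ does the job: the two displayed integrals are bounded exactly as you compute, and on the base the transition has width~$1$ so the potential term costs only $CR^{n-1}$. Two small points you should not leave implicit: (i) in the second displayed inequality you only accounted for the lateral part of $\{1-\varphi>0\}$; the top strip $\{\lambda>R-R/\log R\}$ also contributes to $\int(1-\varphi)^2|\nabla v|^2$, but there $|\nabla v|^2\le C/R^2$ and the volume is $R^{n+1}/\log R$, giving $CR^{n-1}/\log R$, which is harmless; (ii) for $w$ to be an admissible competitor in the sense of Definition~\ref{layer}(a) one needs $\nabla\varphi=0$ on $\partial^+C_R$, which forces $\chi'(0)=0$ (or a routine smoothing of $g$ and the top cutoff).

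Your route is genuinely different from the paper's. The paper does \emph{not} build an explicit cutoff competitor; instead it prescribes the boundary data on $\partial C_R$ (equal to $v$ on $\partial^+C_R$, and to a function $g$ on the base that is $\equiv s$ in $B_{R-1}$), takes $\overline w$ to be the \emph{harmonic extension} of these data in $C_R$, and then bounds $\int_{C_R}|\nabla\overline w|^2$ by the $H^{1/2}(\partial C_R)$ norm of the boundary values via the trace/extension Proposition~\ref{extension}. The $\log R$ then comes from a separate result, Theorem~\ref{key}, which shows that a function on $\partial C_1$ satisfying the rescaled gradient bound $|Dw_1|\le C\min\{\varepsilon^{-1},\lambda^{-1}\}$ has $\|w_1\|^2_{H^{1/2}(\partial C_1)}\le C|\log\varepsilon|$. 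Your construction bypasses the $H^{1/2}$ machinery entirely: it is a direct, elementary computation tailored to the half--Laplacian. What the paper's approach buys is portability---the $H^{1/2}$ framework (and in particular Theorem~\ref{key}) is set up with the other fractional powers $(-\Delta)^s$, $s\neq 1/2$, in mind, where the relevant weighted extensions and trace spaces enter and an ad hoc cutoff is less transparent. For the present theorem, your proof is shorter and uses nothing beyond the decay $|\nabla v|\le C/(1+\lambda)$ and global minimality.
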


As a consequence, (\ref{energy}) and (\ref{kinetic}) also hold for
layer solutions. We stress that this
energy estimate is sharp because it is optimal for 1-D solutions, in the sense that for some explicit 1-D solutions the energy is also bounded below by  $cR^{n-1}\log R$, for some constant $c>0$, when they are seen as solutions in $\re^n$ (see Remark~\ref{Rk2.2} below and
section 2.1 of \cite{C-SM}).

In dimensions $n=1$ and $n=2$ estimate \eqref{energy} was established by
the first author and Sol\`a-Morales in \cite{C-SM}. 

In dimension $n=3$, the energy estimate \eqref{energy} holds also for
monotone solutions which do not satisfy the limit assumption \eqref{hp-limit}.
These solutions are minimizers in some sense to be explained
later, but, in case that they exist, they are not known to be global minimizers as defined
before.

\begin{theorem}\label{energy-dim3}
Let $n=3$, $f$ be any $C^{1,\beta}$ nonlinearity with $\beta
\in (0,1)$, and $u$ be a bounded solution of (\ref{eq1}) such that
$\partial_{e}u>0$ in $\re^3$ for some direction $e\in \re^3$,
$|e|=1$. Let $v$ be its harmonic extension in $\re^{4}_+$.

Then, for all $R>2$,
\begin{equation}\label{en3}
\int_{C_R}\frac{1}{2}|\nabla v|^2dx
d\lambda+\int_{B_R}\{G(u)-c_u\}dx \leq CR^{2}\log R,\end{equation}
where $c_u$ is defined by (\ref{c_u}) and $C$ is a constant
depending only on $||f||_{C^1([\inf u,\sup u])}$ and $||u||_{L^\infty(\re^3)}$.
\end{theorem}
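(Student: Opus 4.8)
The plan is to reduce Theorem~\ref{energy-dim3} to Theorem~\ref{energy-est} by showing that, although a monotone solution in $\re^3$ that fails the limit condition \eqref{hp-limit} may not be a global minimizer in the sense of Definition~\ref{layer}, it is nevertheless a minimizer with respect to a restricted class of competitors, and that this weaker minimality is enough to run the comparison argument behind the energy estimate. First I would recall the key mechanism from the proof of Theorem~\ref{energy-est}: one builds a competitor $w$ that agrees with $v$ on $\partial^+ C_R$ and whose energy in $C_R$ is $O(R^{n-1}\log R)$, using a radial interpolation between the solution near $\partial^+ C_R$ and a function achieving $c_u$ (essentially a constant equal to the minimum point of $G$) deep inside a slightly smaller cylinder; the logarithmic loss comes from the harmonic extension and the width of the transition annulus. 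The point is that this competitor construction only ever \emph{lowers} the value of the function towards the minimizer of $G$, so it respects any monotonicity-type constraint one might wish to impose.

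Next I would make precise the sense in which a monotone (but not necessarily ``layer'') solution is a minimizer. After the rotation sending $e$ to $e_n$, monotonicity $\partial_{x_n} v > 0$ forces $v$ to have limits $\overline v(x',\lambda) = \lim_{x_n\to+\infty} v$ and $\underline v(x',\lambda)=\lim_{x_n\to-\infty} v$, which are themselves solutions of the extension problem \eqref{eq2} in $\re^{n}_+$ (one dimension down), stable, and hence — invoking the $n=2$ results of \cite{C-SM} on stable solutions being 1-D, applied now in the ambient dimension $n-1=2$ — one-dimensional or constant. The standard sliding/monotonicity argument (as in \cite{AC} and \cite{C-SM}) then shows $v$ is a minimizer relative to competitors $w$ with $\underline v \le w \le \overline v$; I would state and prove this as a lemma. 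Crucially, the competitor $w$ built in the proof of Theorem~\ref{energy-est} lies between $\inf v$ and $\sup v$, and, after truncating it to stay between $\underline v$ and $\overline v$ (which only decreases energy, since $G$ is being pushed towards its min and the Dirichlet part of a truncation does not increase), it becomes an admissible competitor for this restricted minimality. Therefore the estimate \eqref{energy} applies verbatim with $n=3$, giving \eqref{en3}.

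The main obstacle I anticipate is the truncation step: one must check that replacing the competitor $w$ by $\max\{\underline v, \min\{\overline v, w\}\}$ genuinely does not increase ${\mathcal E}_{C_R}$, and that the truncated function still agrees with $v$ on $\partial^+ C_R$ (it does, since $v$ itself lies between $\underline v$ and $\overline v$), and remains in the right regularity class. The Dirichlet energy part is handled by the standard fact that $|\nabla \max\{\underline v, \min\{\overline v, w\}\}| \le \max\{|\nabla w|, |\nabla \underline v|, |\nabla \overline v|\}$ a.e., but since $\underline v, \overline v$ need not have controlled energy in all of $C_R$, a little care is needed — here is where one uses that $\underline v$ and $\overline v$ are 1-D (in the $x'$-variables), so their energy in $C_R$ is itself $O(R^{n-1})$, hence harmless. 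For the potential part, $G(\max\{\underline v,\min\{\overline v,w\}\}) - c_u$ is controlled because the truncation moves values towards the interval where $G$ attains $c_u$; one also needs $c_u$ computed with $\inf u = \inf \underline u$ and $\sup u = \sup \overline u$ to match. Assembling these observations, together with the unchanged radial-competitor computation from Theorem~\ref{energy-est}, yields \eqref{en3}; the dimensional restriction $n=3$ enters only through the 1-D classification of the stable limit profiles $\underline v$, $\overline v$ in $\re^2$.
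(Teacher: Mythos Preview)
Your overall strategy matches the paper's: (i) show that a monotone solution is a minimizer among competitors $w$ with $\underline v \le w \le \overline v$ (this is the paper's Proposition~\ref{monot-min}); (ii) use stability plus the 2-D result of \cite{C-SM} to see that $\underline v$, $\overline v$ are constant or one-dimensional (this is the paper's Lemma~\ref{lemma}); (iii) feed the comparison function from the proof of Theorem~\ref{energy-est} into this restricted minimality. You also correctly identify that the dimensional restriction $n=3$ enters only through step (ii).

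The difference lies in step (iii). The paper does \emph{not} truncate. Lemma~\ref{lemma} records, beyond the 1-D classification, the structural facts $G > G(m) = G(\widetilde m)$ on $(m,\widetilde m)$ and $G > G(\widetilde M) = G(M)$ on $(\widetilde M, M)$. From these one deduces $\widetilde m \le \widetilde M$ and that the minimum $c_u$ of $G$ on $[m,M]$ is attained at some $s \in [\widetilde m, \widetilde M] = [\sup \underline u, \inf \overline u]$. With this particular choice of $s$, the competitor $\overline w$ of \eqref{w} satisfies $\underline v \le \overline w \le \overline v$ automatically by the maximum principle, and is therefore admissible for Proposition~\ref{monot-min} without any modification.

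Your truncation route has a genuine gap in the potential-energy step. The claim that ``truncation moves values towards the interval where $G$ attains $c_u$'' is false as stated: on $B_{R-1}$ the original competitor equals the constant $s$ with $G(s)=c_u$, the \emph{minimum} of $G$; truncating by $\underline u$ (wherever $\underline u > s$) replaces $s$ by $\underline u(x')$, where $G(\underline u) > c_u$ strictly. Thus the potential energy \emph{increases} after truncation, a priori by an amount of order $|B_{R-1}| \sim R^3$. To see that the increase is in fact only $O(R^2)$ one needs precisely the structural information on $G$ from Lemma~\ref{lemma}: it forces any minimizing $s \notin [\widetilde m, \widetilde M]$ to lie at an endpoint $m$ or $M$, whence $c_u = G(m) = G(\widetilde m)$ (resp.\ $=G(M)$), and then the excess potential is the bounded 1-D potential energy of the layer $\underline u$ (resp.\ $\overline u$) multiplied by $O(R^2)$. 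But once you have that information in hand, it is both shorter and cleaner to choose $s \in [\widetilde m, \widetilde M]$ from the outset and avoid truncation entirely, which is what the paper does. (A minor slip: the Dirichlet energies of $\underline v$, $\overline v$ in $C_R$ are $O(R^{2}\log R)$, not $O(R^{2})$, though this does not affect the conclusion.)
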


In dimension $n=3$, Theorems \ref{energy-est} and
\ref{energy-dim3} lead to the 1-D symmetry of global minimizers and of
monotone solutions to problem (\ref{eq1}).

\begin{theorem}\label{degiorgi}
Let $n=3$ and $f$ be any $C^{1,\beta}$ nonlinearity with
$\beta \in (0,1)$. Let $u$ be either a bounded global minimizer of
\eqref{eq1}, or a bounded solution of \eqref{eq1} monotone in some direction
$e\in \re^3$, $|e|=1$.

Then,
$u$ depends only on
one variable, i.e., there exists $a \in \re^3$ and $g: \re
\rightarrow \re $, such that $u(x)=g(a\cdot x)$ for all $x \in
\re^3$. Equivalently, the level sets of $u$ are planes.\end{theorem}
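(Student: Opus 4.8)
The strategy is to reduce both alternatives to a Liouville‑type argument for the linearized equation of \eqref{eq2}, driven by the energy estimates of Theorems \ref{energy-est} and \ref{energy-dim3}. Fix $n=3$. First I would record that in each case $u$ admits a positive solution of the linearized problem in $\re^{n+1}_+$. If $u$ is monotone in the direction $e$, then, differentiating \eqref{eq2}, the function $\sigma:=\partial_e v$ is bounded and solves $\Delta\sigma=0$ in $\re^{n+1}_+$ with $-\partial_\lambda\sigma=f'(v)\sigma$ on $\re^n$; since $\partial_e u>0$ on $\re^n$, the Poisson representation (or the strong maximum principle) gives $\sigma>0$ in $\overline{\re^{n+1}_+}$. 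If instead $u$ is a global minimizer, then $v$ is stable, i.e.\ $\int_{\re^{n+1}_+}|\nabla\xi|^2\,dx\,d\lambda\geq\int_{\re^n}f'(v)\xi^2\,dx$ for all $\xi\in C^1_c(\overline{\re^{n+1}_+})$ (nonnegativity of the second variation of $\mathcal{E}_{C_R}$ with test functions vanishing near $\partial^+C_R$), and from stability one obtains, by taking principal eigenfunctions of the linearized operator on the cylinders $C_R$ and letting $R\to\infty$ (using Harnack to pass to the limit), a function $\sigma>0$ in $\overline{\re^{n+1}_+}$ solving the same linearized problem. In either case I also have, for $\rho>2$, the bound $\int_{C_\rho}\frac12|\nabla v|^2\,dx\,d\lambda\leq C\rho^{2}\log\rho$, which is \eqref{kinetic} with $n=3$ for global minimizers and \eqref{en3} for monotone solutions.

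The core is then the following. For $i=1,\dots,n$ put $\psi_i:=\partial_{x_i}v$; differentiating \eqref{eq2} shows each $\psi_i$ solves the linearized problem, so the quotient $\varphi_i:=\psi_i/\sigma$ (well defined since $\sigma>0$) satisfies $\mathrm{div}(\sigma^2\nabla\varphi_i)=0$ in $\re^{n+1}_+$ together with the conormal condition $\sigma^2\partial_\lambda\varphi_i=0$ on $\re^n$; both are immediate from the linearized equations for $\sigma$ and $\psi_i$. Testing against $\varphi_i\eta^2$ for $\eta\in C^1_c(\re^{n+1})$ and integrating by parts (the boundary term vanishes by the conormal condition) gives
\begin{equation*}
\int_{\re^{n+1}_+}\sigma^2|\nabla\varphi_i|^2\eta^2\,dx\,d\lambda\;\leq\;4\int_{\re^{n+1}_+}\sigma^2\varphi_i^2|\nabla\eta|^2\,dx\,d\lambda\;=\;4\int_{\re^{n+1}_+}\psi_i^2|\nabla\eta|^2\,dx\,d\lambda\;\leq\;4\int_{\re^{n+1}_+}|\nabla v|^2|\nabla\eta|^2\,dx\,d\lambda,
\end{equation*}
the key cancellation being $\sigma^2\varphi_i^2=\psi_i^2$, which removes all dependence on $\sigma$ and lets the energy estimate enter. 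Choosing $\eta=\eta_R$ to be the logarithmic cutoff in $\re^{n+1}$ — equal to $1$ on $\{|(x,\lambda)|\leq\sqrt R\}$, to $0$ outside $\{|(x,\lambda)|\leq R\}$, with $|\nabla\eta_R|\leq C/(|(x,\lambda)|\log R)$ in between — and decomposing dyadically while using $\int_{C_\rho}|\nabla v|^2\leq C\rho^2\log\rho$ and $B^{n+1}_\rho\cap\re^{n+1}_+\subset C_\rho$, one finds $\int_{\re^{n+1}_+}|\nabla v|^2|\nabla\eta_R|^2\leq C$ uniformly in $R$ (it is exactly here, in the balance of two powers of $\log R$, that $n=3$ is used). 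Letting $R\to\infty$ yields $\int_{\re^{n+1}_+}\sigma^2|\nabla\varphi_i|^2<\infty$.

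With this finiteness I would run the estimate once more, but now bounding the right‑hand side by Cauchy--Schwarz in the form $2\big(\int_{\{\nabla\eta_R\neq0\}}\sigma^2|\nabla\varphi_i|^2\big)^{1/2}\big(\int_{\re^{n+1}_+}\sigma^2\varphi_i^2|\nabla\eta_R|^2\big)^{1/2}$: the first factor tends to $0$ as $R\to\infty$, being a tail of a convergent integral, while the second stays bounded by the previous step. Hence $\int_{\re^{n+1}_+}\sigma^2|\nabla\varphi_i|^2=0$, and since $\sigma>0$ this forces $\varphi_i\equiv c_i$ for a constant $c_i$, i.e.\ $\partial_{x_i}v=c_i\sigma$ for $i=1,\dots,n$. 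Thus $\nabla_x v$ is everywhere parallel to the fixed vector $c=(c_1,\dots,c_n)$, so $v(x,\lambda)=\widetilde v(c\cdot x,\lambda)$, and therefore $u(x)=v(x,0)=g(a\cdot x)$ with $a=c$ (or any $a\in\re^n$ when $c=0$, in which case $u$ is constant). I expect the main obstacle to be precisely this two‑step structure: because the energy estimates carry the factor $\log R$, a single use of the cutoff inequality only bounds $\int\sigma^2|\nabla\varphi_i|^2\eta_R^2$ rather than showing it vanishes, so one must first extract finiteness over the whole half‑space and only then bootstrap to $0$. The other delicate point is producing the positive linearized solution $\sigma$ in the global‑minimizer case from stability alone (via eigenfunctions on expanding cylinders and a Harnack passage to the limit); for monotone solutions this is free, $\sigma$ being $\partial_e v$.
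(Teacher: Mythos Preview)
Your proposal is correct and follows the same overall architecture as the paper: produce a positive solution of the linearized problem (from monotonicity directly, or from stability via the construction that the paper cites as Lemma~4.1 of \cite{C-SM}), form the quotients $\partial_{x_i}v/\sigma$, note that they satisfy $\mathrm{div}(\sigma^2\nabla\varphi_i)=0$ with vanishing conormal derivative, and run a Liouville argument fed by the $R^2\log R$ energy bound. The only genuine difference is in how the Liouville step is executed. The paper invokes Moschini's theorem (Proposition~\ref{moschini}), whose proof is an ODE-type inequality for $D(R)=\int_{C_R}\sigma^2|\nabla\varphi_i|^2$ obtained by integrating over $\partial^+C_R$; you instead use a logarithmic cutoff supported in $\{\sqrt{R}\leq|(x,\lambda)|\leq R\}$ and a two-pass scheme (first extract finiteness of $\int_{\re^{n+1}_+}\sigma^2|\nabla\varphi_i|^2$ from the uniform bound on $\int|\nabla v|^2|\nabla\eta_R|^2$, then bootstrap to vanishing via the tail). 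Moschini's result is a bit more general, handling any growth $R^2F(R)$ with $\int_2^\infty(rF(r))^{-1}\,dr=\infty$, whereas your argument is tailored to $F(R)=\log R$; on the other hand your version is self-contained. Both rely on the same identity $\sigma^2\varphi_i^2=(\partial_{x_i}v)^2$ to tie the Liouville inequality to the kinetic energy estimate, and your notation simply swaps the roles of $\sigma$ and $\varphi$ relative to the paper.
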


To prove 1-D symmetry, we use a standard Liouville type argument which requires an appropriate estimate
for the kinetic energy. By a result of Moschini \cite{mosch} (see Proposition \ref{moschini} in section 6 below),
our energy estimate in $\re^3$,
$$\int_{C_R}|\nabla v|^2dx d\lambda\leq CR^2 \log R,$$ allows to use such Liouville type result and deduce 1-D symmetry in $\re^3$ for global minimizers and for solutions
monotone in one direction.

\begin{remark}\label{sharp} As a consequence of Theorem \ref{degiorgi}, we obtain that for all $R>2$,
\begin{equation}\label{poten}
\int_{B_R}G(v(x,0))dx\leq CR^{n-1}\quad \mbox{if}\:\:1\leq n\leq 3,
\end{equation} if $v$ is a bounded global minimizer or a bounded monotone solution of \eqref{eq2}. This was proven in \cite{C-SM} for $n=1$ and $n=2$. For $n=3$, \eqref{poten} follows from the $n=1$ case after using Theorem \ref{degiorgi}. In dimension $n\geq 4$ we do not know if the potential energy can be bounded by $CR^{n-1}$ (instead of $CR^{n-1}\log R$) as in \eqref{poten}.
\end{remark}
In our next paper, using similar techniques, we establish sharp energy
estimates for the other fractional powers of the Laplacian. More
precisely, we prove that if $u$ is a bounded global minimizer
of 
\begin{equation}\label{eqs}(-\Delta)^su=f(u)\;\;\mbox{in}\;\re^n,\;\;\mbox{with}\;0<s<1,
\end{equation} then
the following energy estimate holds:
$$\mathcal{E}_{s,C_R}(u)\leq CR^{n-2s}\quad \mbox{for}\;0<s<\frac{1}{2},$$
$$\mathcal{E}_{s,C_R}(u)\leq CR^{n-1}\quad \mbox{for}\;\frac{1}{2}<s<1.$$
 Here the energy functional
is defined using a local formulation in $\re^{n+1}_+$ of problem
(\ref{eqs}), found by Caffarelli and Silvestre in \cite{CS}. If
$1/2<s<1$ then $\mathcal{E}_{s,C_R}(u)\leq CR^{n-1}$; in this case we
can deduce 1-D symmetry for global minimizers and monotone
solutions in dimension $n=3$.

Back to the case $s=1/2$, we have two different proofs of the energy estimate $CR^{n-1}\log R$. 

The first one is very simple but applies only to Allen-Cahn type nonlinearities (such as $f(u)=u-u^3$) and to monotone solutions satisfying the limit assumption \eqref{hp-limit} or the more general \eqref{limit} below. We present this very simple proof in section~2. It was found by Ambrosio and the first author \cite{AC} to prove the optimal energy estimate for $-\Delta u=u-u^3$ in $\re^n$.

Our second proof applies in more general situations and will lead to Theorems \ref{energy-est} and \ref{energy-dim3}. It is based on controlling the $H^1(\Omega)$-norm of a function by its fractional Sobolev norm $H^{1/2}(\partial \Omega)$ on the boundary.

Let us recall the definition of the $H^{1/2}(A)$ norm,
 where $A$ is either a Lipschitz open set of $\re^n$,
or $A=\partial \Omega$ and $\Omega$ is a Lipschitz open set of
$\re^{n+1}$. It is given by
$$||w||^2_{H^{1/2}{(A)}}=||w||^2_{L^2(A)}+\int_{A}\int_{A}\frac{|w(z)-w(\overline{z})|^2}
{|z-\overline{z}|^{n+1}}d\sigma_z
d\sigma_{\overline{z}}.$$ In our proof we will have
$A=\partial C_R\subset \re^{n+1}$, the boundary of the cylinder $\Omega=C_R$.

In the proof of Theorem \ref{energy-est} a crucial point will be
the following well known result. If $w$ is a function in
$H^{1/2}(\partial \Omega)$, where $\Omega$ is a bounded subset of
$\re^{n+1}$ with Lipschitz boundary, then the harmonic extension
$\overline{w}$ of $w$ in $\Omega$ satisfies:
\begin{equation}\label{H-est}
\int_{\Omega}|\nabla \overline{w}|^2 \leq C(\Omega)
||w||^2_{H^{1/2}(\partial \Omega)}.\end{equation} For the sake of
completeness (and since the proof will be important in our next paper \cite{C-Cinti2}), we will recall a proof of this result in
section 3 (see Proposition \ref{extension}).

To prove the sharp energy estimate for a global minimizer $v$ in $\re^{n+1}_+$, we will bound its energy in the cylinder $C_R=B_R\times (0,R)$, using \eqref{H-est}, by that of the harmonic extension $\overline w$ in $C_R$ of a well chosen function $w$ defined on $\partial C_R$. This function $w$ must agree with $v$ on $\partial^+C_R$ (the lateral and top boundaries of $C_R$), while it will be identically $1$ on the portion $B_{R-1}\times \{0\}$ of the bottom boundary. In this way, it will not pay potential energy in this portion of the bottom boundary.

By \eqref{H-est}, we will need to control $||w||_{H^{1/2}(\partial C_R)}$. After rescaling $\partial C_R$ to $\partial C_1$, we will control the $H^{1/2}$-norm of $w$ using the following key result. It gives a bound on the $H^{1/2}$-norm of functions on $A$ which satisfy a certain
gradient pointwise bound related with the distance to a Lipschitz subset $\Gamma$
of $A$. We will apply it in the sets
$$A=\partial C_1\quad \mbox{and}\quad \Gamma=\partial B_1\times \{\lambda=0\},$$
with a small parameter $\varepsilon=1/R$. Examples in which the following theorem applies are, among many others, $A=B_1\subset \re^n$ the unit ball and $\Gamma=B_1\cap \{x_n=0\}$, and 
also $A=B_1\subset \re^n$ and $\Gamma=\partial B_r$ for some $r\in (0,1)$.
\begin{theorem}\label{key}
Let $A$ be either a bounded Lipschitz domain in $\re^n$ or $A=\partial
\Omega$, where $\Omega$ is a bounded open set of $\re^{n+1}$ with
Lipschitz boundary. Let $M \subset A$ be an open set (relative to $A$) with
Lipschitz boundary (relative to $A$) $\Gamma \subset A$. Let $\varepsilon\in (0,1/2)$. 

Let $w:A \rightarrow \re$ be a Lipschitz function such that, for almost
every $x\in A$, 
 \begin{equation}\label{bound_w}|w(x)|\leq c_0\end{equation} and 
 \begin{equation}\label{bound_grad}
\displaystyle |D w(x)|\leq c_0\min\left\{\frac{1}{\varepsilon},\frac{1}{\mbox{\rm{dist}}(x,\Gamma)}\right\},\end{equation}
where $D$ are all tangential derivatives to $A$, $\mbox{\rm{dist}}(x,\Gamma)$ is the distance 
from the point
$x$ to the
set $\Gamma$ (either in $\re^n$ or in $\re^{n+1}$), and $c_0$ is a positive constant.

Then,
\begin{equation}\label{logB}
||w||^2_{H^{1/2}{(A)}}=||w||^2_{L^2(A)}+\int_{A}\int_{A}\frac{|w(z)-w(\overline
z)|^2}{|z-\overline z|^{n+1}}d\sigma_z d\sigma_{\overline z}\leq c_0^2C
|\log \varepsilon|,
\end{equation} where $C$ is a positive constant depending only on
$A$ and $\Gamma$.
\end{theorem}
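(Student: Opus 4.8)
The plan is to normalize $c_0=1$ (replace $w$ by $w/c_0$), dispose of the $L^2$-term trivially using \eqref{bound_w} and the boundedness of $A$, and then estimate the Gagliardo seminorm $[w]^2:=\int_A\int_A\frac{|w(z)-w(\bar z)|^2}{|z-\bar z|^{n+1}}\,d\sigma_z\,d\sigma_{\bar z}$ by a dyadic decomposition in the quantity $d(z):=\mathrm{dist}(z,\Gamma)$, where $\sigma$ is the $n$-dimensional Hausdorff measure on $A$. I will use two standard geometric facts about $A$: that $A$ is upper $n$-Ahlfors regular, $\sigma(A\cap B_s(x))\le Cs^n$, which yields $\int_{A\cap B_r(x)}|x-y|^{1-n}\,d\sigma(y)\le Cr$ and $\int_{A\setminus B_r(x)}|x-y|^{-n-1}\,d\sigma(y)\le C/r$ by summation over dyadic annuli (for $0<r\le\mathrm{diam}\,A$); and that $A$ is locally quasiconvex, i.e.\ there are $C_A\ge1$ and $\delta_0>0$ such that any $z,\bar z\in A$ with $|z-\bar z|<\delta_0$ can be joined by a curve contained in $A\cap B_{C_A|z-\bar z|}(z)$ of length $\le C_A|z-\bar z|$. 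The latter follows from covering the compact set $\overline A$ by finitely many balls in which $A$ is bi-Lipschitz equivalent to a ball of $\re^n$, pulling back the straight segment joining the images of $z$ and $\bar z$, and taking $\delta_0$ to be a Lebesgue number of the cover. All constants $C$ below depend only on $A$ and $\Gamma$.

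The heart of the argument is the pointwise estimate
\[
\int_A\frac{|w(z)-w(\bar z)|^2}{|z-\bar z|^{n+1}}\,d\sigma_{\bar z}\;\le\;\frac{C}{\max\{d(z),\varepsilon\}}\qquad\text{for every }z\in A.
\]
To prove it I would split the integral at radius $\rho(z):=\min\{c\max\{d(z),\varepsilon\},\delta_0\}$ with $c=1/(4C_A)$. For $|z-\bar z|<\rho(z)$ the connecting curve stays in a ball on which $d(\cdot)\ge\tfrac12\max\{d(z),\varepsilon\}$ when $d(z)>\varepsilon$, and on which $|Dw|\le1/\varepsilon$ in any case, so \eqref{bound_grad} gives $|w(z)-w(\bar z)|\le C|z-\bar z|/\max\{d(z),\varepsilon\}$; the first Ahlfors estimate with $r=\rho(z)$ then bounds this region's contribution by $C\rho(z)/\max\{d(z),\varepsilon\}^2\le C/\max\{d(z),\varepsilon\}$. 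For $|z-\bar z|\ge\rho(z)$ I would use $|w(z)-w(\bar z)|\le2$ together with the second Ahlfors estimate to get $\le C/\rho(z)$, which is in turn $\le C/\max\{d(z),\varepsilon\}$ because $\rho(z)\ge c\max\{d(z),\varepsilon\}$ unless $\rho(z)=\delta_0$, in which case $\max\{d(z),\varepsilon\}\le\mathrm{diam}\,A\le C\delta_0$.

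It then remains to integrate in $z$. Using that $\Gamma$ is Lipschitz of codimension one in $A$, so that $\sigma(\{z\in A:d(z)\le t\})\le Ct$ for $0<t\le\mathrm{diam}\,A$, the part $\{d(z)\le\varepsilon\}$ contributes at most $\varepsilon^{-1}\sigma(\{d\le\varepsilon\})\le C$, while on $\{d(z)>\varepsilon\}$ I would decompose into the $O(|\log\varepsilon|)$ dyadic shells $E_k=\{2^{-k-1}D<d(z)\le2^{-k}D\}$ with $D=\mathrm{diam}\,A$, each of which contributes $\sigma(E_k)\sup_{E_k}d^{-1}\le C\,2^{-k}D\cdot 2^{k+1}D^{-1}=O(1)$; summing yields $[w]^2\le C|\log\varepsilon|$. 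Combined with $\|w\|_{L^2(A)}^2\le\sigma(A)\le C\le C|\log\varepsilon|$ and undoing the normalization, this gives \eqref{logB}.

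The main obstacle I anticipate is the geometry of $A$ in the general Lipschitz setting: establishing local quasiconvexity with uniform constants via bi-Lipschitz charts while keeping the connecting curve inside $B_{C_A|z-\bar z|}(z)$ (so that $d(\cdot)$ is controlled along it), upgrading the almost-everywhere gradient bound \eqref{bound_grad} to an honest bound on the increments of $w$ along such curves (by a mollification argument), and the tubular-neighbourhood estimate $\sigma(\{d\le t\})\le Ct$ for the Lipschitz set $\Gamma$. All of these are routine but require the charts and a covering argument; the dyadic computations themselves are straightforward.
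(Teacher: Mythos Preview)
Your argument is correct, and its analytic core coincides with the paper's: for each $z$ you split the $\bar z$-integral at the scale $\rho(z)\sim\max\{d(z),\varepsilon\}$, use the gradient bound \eqref{bound_grad} on the near part and the $L^\infty$ bound \eqref{bound_w} on the far part, and then integrate in $z$ using that the $t$-tubular neighbourhood of $\Gamma$ in $A$ has measure $\le Ct$. The paper carries out exactly this computation, with $R_x$ playing the role of your $\rho(z)$.

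The organization, however, is different. The paper first treats the model case $A=Q_1\subset\re^n$, $\Gamma=\{x_n=0\}\cap Q_1$, where all the geometry is trivial, and then reduces the general case to this model by a finite covering with bi-Lipschitz charts: charts centered on $\overline\Gamma$ that flatten $\Gamma$ to $\{x_n=0\}$, and charts away from $\Gamma$ on which $|Dw|$ is bounded by a constant. You instead work intrinsically on $A$ and state the geometric input as three abstract lemmas (upper Ahlfors regularity, local quasiconvexity, and the tubular-neighbourhood estimate $\sigma(\{d\le t\})\le Ct$). These lemmas are precisely what the charts provide, so the two write-ups are equivalent in content; the paper's is more self-contained, while yours is cleaner once the geometric facts are granted and has the small bonus of isolating the pointwise estimate $\int_A|w(z)-w(\bar z)|^2|z-\bar z|^{-n-1}\,d\sigma_{\bar z}\le C/\max\{d(z),\varepsilon\}$. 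The subtlety you flag---upgrading the a.e.\ bound \eqref{bound_grad} to an increment bound along curves---is real, and the paper handles it implicitly by pulling back to a cube where the mean-value inequality along segments (and Fubini) applies directly.
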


As we said, we will use this result with $A=\partial C_1$ and
$\Gamma=\partial B_1 \times \{\lambda=0\}$. Thus, in this case the
constant $C$ in \eqref{logB} only depends on the
dimension $n$.
The gradient estimate \eqref{bound_grad}, after rescaling $\partial C_R$ to $\partial C_1$ and taking $\varepsilon=1/R$, will follow from the bound
\begin{equation}\label{grad_CSM}|\nabla v(x,\lambda)|\leq \frac{C}{1+\lambda}\quad \mbox{for
all}\;x\in \re^n\;\mbox{and}\;\lambda\geq 0,
\end{equation} 
satisfied by every bounded solution $v$ of \eqref{eq2}. Here the constant $C$ depends only on $n$, $||f||_{C^1}$, and $||v||_{L^\infty(\re^{n+1}_+)}$.
For $\lambda \geq 1$, \eqref{grad_CSM} follows immediately from the fact that $v$ is bounded and harmonic in $B_{\lambda}(x,\lambda)\subset \re^{n+1}_+$. For $\lambda \leq 1$, estimate \eqref{grad_CSM}  for bounded solutions of 
the nonlinear Neumann problem \eqref{eq2} was proven in Lemma 2.3 of \cite{C-SM}.

Our method to prove sharp energy estimates also applies to solutions which are 
minimizers under perturbations
vanishing on a suitable subset of $\re^n$, even if they are not in general global minimizers as defined before. An important example of this are some \emph{saddle-shaped solutions} (or
\emph{saddle solutions} for short) of
$$(-\Delta)^{1/2}u=f(u)\quad \mbox{in}\;\;\re^{2m}.$$ The existence and qualitative properties of
these solutions have been studied by the second author in \cite{Cinti}.
For equations of Allen-Cahn type involving the Laplacian,
$-\Delta u=f(u)$, saddle solutions have been studied in \cite{CT,CT2}.

Saddle solutions are even with respect to the coordinate axes and
odd with respect to the Simons cone, which is defined as follows.
For $n=2m$ the Simons cone ${\mathcal C}$ is given by
$$\mathcal{C}=\{x\in
\re^{2m}:x_1^2+...+x_m^2=x_{m+1}^2+...+x_{2m}^2\}.$$
We define two new variables $$s=\sqrt{x_1^2+\dots + x_m^2} \quad
\mbox{ and }\quad t=\sqrt{x_{m+1}^2+\dots + x_{2m}^2},$$ for which
the Simons cone becomes ${\mathcal C}=\{s=t\}$.

The existence of saddle solutions of \eqref{eq1} has been proven in \cite{Cinti} under the following hypotheses on $f$:
\begin{equation}\label{h11}
f \; \text{ is odd};\end{equation}
\begin{equation}\label{h22}
 G\geq 0=G(\pm 1)\; {\rm in\, } \re, \,{\rm and }\, G>0\; {\rm in }\, (-1,1);\end{equation}
\begin{equation}\label{h33}  f' \; \text{ is decreasing in } (0,1).\end{equation}

Note that, if \eqref{h11} and \eqref{h22} hold, then $f(0)=f(\pm 1)=0.$
Conversely, if $f$ is odd in $\re$, positive with $f'$ decreasing
in $(0,1)$ and negative in $(1,\infty)$ then $f$ satisfies \eqref{h11},
\eqref{h22} and \eqref{h33}. Hence, the nonlinearities $f$ that we consider are
of ``balanced bistable type", while the potentials $G$ are of
``double well type". Our three assumptions \eqref{h11}, \eqref{h22}, \eqref{h33} are
satisfied by the scalar Allen-Cahn type equation
\begin{equation*}
(-\Delta)^{1/2} u= u-u^3.
\end{equation*}
In this case we have that $G(u)=(1/4)(1-u^2)^2$. The three hypotheses also hold for the equation
$(-\Delta)^{1/2} u=\sin (\pi u)$, for which $G(u)=(1/\pi)(1+\cos
(\pi u))$.

The following result states the existence of at least one saddle solution for which our 
sharp energy estimate holds.

\begin{theorem}\label{saddle} Let $f$ be a $C^{1,\beta}$ function for some $0<\beta<1$, satisfying \eqref{h11}, \eqref{h22}, and \eqref{h33}.
Then, there exists a {\it saddle solution} $u$ of $(-\Delta)^{1/2}
u=f(u)$ in $\re^{2m}$, i.e., a bounded solution $u$ such that

\noindent
(a) $u$ depends only on the variables $s$ and $t$. We write
$u=u(s,t)$; 

\noindent (b) $u>0$ for $s>t$; 

\noindent (c) $u(s,t)=-u(t,s)$.

 Moreover, $|u|< 1$ in $\re^{2m}$ and for every $R>2$,
$$\mathcal E_{C_R}(v)\leq CR^{2m-1}\log R,$$
where $v$ is the harmonic extension of $u$ in
$\re^{2m+1}_+$ and $C$ is a constant depending only on $m$ and $||f||_{C^1([-1,1])}$.
\end{theorem}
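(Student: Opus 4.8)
The plan is to follow the same strategy used in \cite{Cinti} to construct a saddle solution via a constrained minimization, and then to apply our energy-estimate machinery (Theorems~\ref{key} and the extension inequality \eqref{H-est}) to the resulting solution. First I would pass to the harmonic extension and work in $\re_+^{2m+1}$, introducing the variables $(s,t,\lambda)$ and the open region $\mathcal O = \{s>t\}\cap\{\lambda>0\}$. On $\mathcal O$ one minimizes the energy $\mathcal E_{C_R}$ (for each $R$) over functions that are nonnegative, bounded by $1$, vanish on the Simons cone $\{s=t\}$, and agree outside $C_R$ with a fixed competitor; extending by odd reflection across $\{s=t\}$ produces a solution of \eqref{eq2} on all of $\re_+^{2m+1}$. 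Hypotheses \eqref{h11}--\eqref{h33} guarantee via the standard maximum-principle and sliding arguments (as in \cite{Cinti,CT,CT2}) that the minimizer is strictly between $-1$ and $1$, is strictly positive in $\{s>t\}$, and depends only on $(s,t,\lambda)$; this yields properties (a), (b), (c) and the bound $|u|<1$. I would cite \cite{Cinti} for the details of this construction, since the novelty here is only the energy bound.

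The second and main part is the energy estimate $\mathcal E_{C_R}(v)\le C R^{2m-1}\log R$. The obstacle, compared with Theorems~\ref{energy-est} and \ref{energy-dim3}, is that the saddle solution is \emph{not} a global minimizer in the sense of Definition~\ref{layer}: it is only a minimizer among competitors that respect the odd symmetry across $\{s=t\}$ (equivalently, that vanish on $\mathcal C$). Hence the comparison function $w$ on $\partial C_R$ that we plug into \eqref{H-est} must itself vanish on the slice $\mathcal C\cap\partial^+C_R$, so that its harmonic extension $\overline w$ in $C_R$ is an admissible competitor and the minimality inequality $\mathcal E_{C_R}(v)\le \mathcal E_{C_R}(\overline w)$ applies. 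I would therefore choose, exactly as in the proof of Theorem~\ref{energy-est}, the function $w$ on $\partial C_R$ equal to $v$ on $\partial^+C_R$ and equal to $\mathrm{sign}(s-t)$ (i.e.\ $+1$ on $\{s>t\}$, $-1$ on $\{s<t\}$, $0$ on $\mathcal C$) on the interior portion $B_{R-1}\times\{\lambda=0\}$ of the bottom, interpolating on the annular collar $(B_R\setminus B_{R-1})\times\{\lambda=0\}$ and across a neighbourhood of $\mathcal C\cap\{\lambda=0\}$ so that $w$ vanishes on $\mathcal C$. Because $\mathrm{sign}(s-t)$ is locally constant away from $\mathcal C$, this $w$ pays no potential energy on $B_{R-1}\times\{\lambda=0\}$ except in a tubular neighbourhood of $\mathcal C$.

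To estimate $\|w\|_{H^{1/2}(\partial C_R)}$ I would rescale $\partial C_R$ to $\partial C_1$, set $\varepsilon=1/R$, and apply Theorem~\ref{key} with $A=\partial C_1$ and with $\Gamma$ now the union of the lateral circle $\partial B_1\times\{\lambda=0\}$ \emph{and} the trace of the Simons cone $\mathcal C\cap(\overline{B_1}\times\{\lambda=0\})$ — both are Lipschitz subsets of $A$, so the hypotheses of Theorem~\ref{key} are met. The gradient bound \eqref{bound_grad} for the rescaled $w$ follows from \eqref{grad_CSM} applied to $v$ on the lateral/top part (as in Theorem~\ref{energy-est}) and from the explicit Lipschitz interpolation near the cone, whose tangential gradient is $O(1/\mathrm{dist}(\cdot,\mathcal C))$ by construction; Theorem~\ref{key} then gives $\|w\|^2_{H^{1/2}(\partial C_1)}\le C\log R$ with $C=C(m)$. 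Undoing the rescaling, \eqref{H-est} yields $\int_{C_R}|\nabla\overline w|^2\le CR^{2m-1}\log R$, and since $\overline w$ takes values in $[-1,1]$ its potential term is $\le CR^{2m-1}\log R$ as well (indeed only the tubular neighbourhood of $\mathcal C$ inside $B_R$ contributes, which has measure $\le CR^{2m-1}$). By the constrained minimality of $v$ we conclude $\mathcal E_{C_R}(v)\le \mathcal E_{C_R}(\overline w)\le CR^{2m-1}\log R$, with $C$ depending only on $m$ and $\|f\|_{C^1([-1,1])}$, as claimed. The one point requiring care — and the place where this proof genuinely differs from that of Theorem~\ref{energy-est} — is verifying that the chosen $w$ is an \emph{admissible} competitor for the constrained problem, i.e.\ that it vanishes on $\mathcal C$ and can be reached from $v$ by a perturbation supported in $C_R$ and vanishing on $\mathcal C$; this is why the odd interpolation across the cone is built into $w$ from the start.
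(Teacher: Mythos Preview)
Your overall strategy---constrained minimization in $\{s>t\}$ for existence, then a comparison argument combined with the extension estimate \eqref{H-est} and Theorem~\ref{key}---is exactly the paper's. The existence sketch, the choice of bottom datum interpolating between $v(\cdot,0)$ and $\pm 1$ across a unit neighbourhood of $\mathcal C$, and the potential-energy bound $CR^{2m-1}$ (from the width-one tube around $\mathcal C$ together with $B_R\setminus B_{R-1}$) are all correct and match the paper.

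There is, however, a real gap in your application of Theorem~\ref{key}. You take $A=\partial C_1\subset\re^{2m+1}$ and let $\Gamma$ include the trace $\mathcal C\cap(\overline{B_1}\times\{0\})$ of the Simons cone on the bottom face, asserting that ``both are Lipschitz subsets of $A$, so the hypotheses of Theorem~\ref{key} are met''. This fails at the origin: $\{s>t\}\cap B_1$ is \emph{not} a Lipschitz domain of $\re^{2m}$ at $0$. Indeed $\{s>t\}$ is invariant under $x\mapsto -x$, so for every direction $e$ the two rays $\{te:t>0\}$ and $\{-te:t>0\}$ lie on the \emph{same} side of the cone; hence $\partial\{s>t\}$ cannot be written near $0$ as a Lipschitz graph over any hyperplane. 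The covering-and-flattening argument in Step~2 of the proof of Theorem~\ref{key} therefore breaks down at the vertex, and the theorem is not applicable as you invoke it. The paper flags this point explicitly (``it is not Lipschitz at the origin if seen as a subset of $\re^{2m+1}$'').

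The paper's remedy is a dimensional reduction that your outline does not contain. Since the competitor depends only on $(s,t,\lambda)$, after rescaling one bounds the Dirichlet integral over $\widetilde{\mathcal O}_1\subset\re^{2m+1}_+$ by the three-variable integral
\[
\int_0^1\!\!\int_{\{s^2+t^2<1,\ s>t\ge 0\}}\bigl\{(\partial_s w_1)^2+(\partial_t w_1)^2+(\partial_\lambda w_1)^2\bigr\}\,ds\,dt\,d\lambda,
\]
using only that the Jacobian weight $s^{m-1}t^{m-1}$ is $\le 1$ on the unit ball. An even reflection $t=|z_2|$ then places the problem in the domain $\Omega=\{z_1^2+z_2^2<1,\ z_1>|z_2|,\ 0<\lambda<1\}\subset\re^3$, which \emph{is} Lipschitz (a quarter-disc times an interval). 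One now chooses $\overline w$ to be the harmonic extension in this three-dimensional $\Omega$ of the prescribed boundary values, and applies Proposition~\ref{extension} and Theorem~\ref{key} there (with $n=2$), obtaining the $|\log\varepsilon|$ bound legitimately. This reduction to $\re^3$ is the missing ingredient in your argument; once it is inserted, the rest of your outline goes through.
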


Observe that the
saddle solution of the theorem satisfies the same optimal energy estimate as global minimizers do, that is, $CR^{n-1}\log R=CR^{2m-1}\log R$, even that in low dimensions it is known \cite{Cinti} that saddle solutions are not global minimizers. Indeed saddle solutions are not stable in dimension 2 (by a result of the first author and Sol\`a-Morales \cite{C-SM}) and in dimensions 4 and 6 (by a result of the second author \cite{Cinti}). As we will explain in the last section, some saddle solutions are minimizers under perturbations vanishing on the Simons cone,  
and this will be enough to prove that they satisfy the sharp energy estimate.

The paper is organized as follows:
\begin{itemize}
\item In section 2 we prove the energy estimate for layer
solutions of Allen-Cahn type equations, using a simple argument
found by Ambrosio and the first author~\cite{AC}.

\item In section 3 we give the proof of the extension theorem
 and of the key Theorem \ref{key}.

\item In section 4 we prove energy estimate \eqref{energy} for
global minimizers and for every nonlinearity $f$, that is, Theorem \ref{energy-est}.

\item In section 5 we establish energy estimates for monotone solutions
in $\re^3$, Theorem \ref{energy-dim3}.

\item In section 6 we prove the 1-D symmetry result, that is,
Theorem \ref{degiorgi}.

\item In section 7 we prove the energy estimate for saddle solutions, Theorem \ref{saddle}.
\end{itemize}

\section{Energy estimate for monotone solutions of Allen-Cahn type equations}
In this section we consider potentials $G$ which satisfy
hypothesis \eqref{h22}, that is, $G\geq 0=G(\pm1)$ in $\re$ and $G>0$ in
$(-1,1)$. In the sequel we consider the energy $\mathcal{E}_{C_R}$
defined by
$$\mathcal{E}_{C_R}(v)=\int_{C_R}\frac{1}{2}|\nabla v|^2dx d\lambda
+ \int_{B_R}G(v)dx.$$ In general, it can be defined up to an
additive constant $c$ in the potential $G(v)-c$, but in this case, by the assumption \eqref{h22} on
$G$, we take $c=0$.
\begin{theorem}\label{energy-layer}
Let $f$ be a $C^{1,\beta}$ function, for some $0<\beta<1$, 
satisfying \eqref{h22}, where $G'=-f$. Let $u$ be a bounded solution of problem
\eqref{eq1} in $\re^n$, with $|u|<1$ in $\re^n$, and let $v$ be the harmonic
extension of $u$ in $\re^{n+1}_+$. Assume that
\begin{equation}\label{hp-monot}u_{x_n}>0 \;\;\mbox{in}\;\; \re^n
\end{equation} and
\begin{equation}\label{limit}
 \lim_{x_n\rightarrow
+\infty}u(x',x_n)=1\;\;\mbox{for all}\:\;x'\in
\re^{n-1}.\end{equation} 

Then, for every $R>2$,
$$\int_{C_R}\frac{1}{2}|\nabla v|^2dx d\lambda \leq {\mathcal E}_{C_R}(v)\leq CR^{n-1}\log R,$$
for some constant $C$ depending only on $n$ and $||f||_{C^1([-1,1])}$.
\end{theorem}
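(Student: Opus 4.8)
The plan is to run the elementary comparison argument of Ambrosio and the first author \cite{AC}, transplanted from the Laplacian to the extension problem \eqref{eq2}; the extra factor $\log R$ (absent for $-\Delta$) will be forced by the fact that $\nabla v$ decays only like $(1+\lambda)^{-1}$ in the extension variable. As reductions: by \eqref{h22} the constants $\pm1$ are minima of $G$, hence $f(\pm1)=0$ and $v\equiv\pm1$ solve \eqref{eq2}; since $G\geq 0$ the inequality $\int_{C_R}\tfrac12|\nabla v|^2\,dx\,d\lambda\leq\mathcal E_{C_R}(v)$ is automatic; after a rotation I may assume the monotone direction is $x_n$; and I will use freely that $|v|=\|u\|_{L^\infty}<1$ (maximum principle) together with the bound $|\nabla v(x,\lambda)|\leq C/(1+\lambda)$ of \eqref{grad_CSM}.

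The first — and, I expect, the main — step is to extract a minimality property of $v$ from the hypotheses \eqref{hp-monot} and \eqref{limit}, since here $v$ is not assumed to be a global minimizer. Consider the translates $v^t(x,\lambda):=v(x',x_n+t,\lambda)$ for $t\geq 0$. By \eqref{hp-monot} these form a strictly increasing family of solutions of \eqref{eq2} with $v^0=v$, and by \eqref{limit} (together with the locally uniform convergence $u(x',x_n+t)\to 1$ that follows from monotonicity via Dini's theorem, transmitted to the extensions by Poisson's formula) one has $v^t\to 1$ locally uniformly on $\overline{\re^{n+1}_+}$ as $t\to+\infty$. Thus $\{v^t\}_{t\geq 0}\cup\{1\}$ foliates the region between the graphs of $v$ and of the constant $1$, and a sliding/calibration argument (the one-sided analogue of Theorem~1.4 of \cite{C-SM}) shows that $v$ minimizes $\mathcal E_{C_R}$ among all competitors $w$ with $v\leq w\leq 1$ in $C_R$ and $w\equiv v$ on $\partial^+C_R$.

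Next I would build such a competitor. For $R>2$ put
\[
w:=\eta\,v+(1-\eta)=v+(1-\eta)(1-v)\quad\text{in }\overline{C_R},\qquad w:=v\quad\text{in }\re^{n+1}_+\setminus C_R,
\]
where $\eta:\overline{C_R}\to[0,1]$ is a Lipschitz cutoff equal to $1$ in a thin neighbourhood of $\partial^+C_R$ and dropping to $0$ toward the interior, arranged so that the transition near the lateral part $\partial B_R\times(0,R)$ takes place across a collar of width comparable to $1+\lambda$ and near the top $B_R\times\{R\}$ across the region $\{\lambda>R/2\}$; then $|\nabla\eta|\leq C/(1+\lambda)$ on the lateral collar and $|\nabla\eta|\leq C/R$ near the top. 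Since $0<1-v<2$ and $0\leq 1-\eta\leq 1$ we have $v\leq w\leq 1$ and $w\equiv v$ near $\partial^+C_R$, so $w$ is admissible above. From $\nabla w=\eta\nabla v+(v-1)\nabla\eta$, $|v-1|\leq 2$, $\eta\leq 1$, and \eqref{grad_CSM}, the Dirichlet energy over the lateral collar is $\leq C\int_0^R (1+\lambda)^{-2}\,R^{n-1}(1+\lambda)\,d\lambda\leq CR^{n-1}\log R$ — this is exactly where the logarithm comes from; the contribution of the top region is $\leq C\,R^{-2}R^{n+1}=CR^{n-1}$; and on the thin set $\{\eta=1\}$, where $w=v$, it is again $\leq CR^{n-1}$ by \eqref{grad_CSM}. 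For the potential, $w(\cdot,0)\equiv 1$ on $B_{R-c}$ and equals $u$ only on an outer shell, so, using $G(1)=0$, $\int_{B_R}G(w(\cdot,0))\,dx\leq\|G\|_{L^\infty([-1,1])}\,|B_R\setminus B_{R-c}|\leq CR^{n-1}$. Summing, $\mathcal E_{C_R}(w)\leq CR^{n-1}\log R$ with $C$ depending only on $n$ and $\|f\|_{C^1([-1,1])}$, and by the previous step $\mathcal E_{C_R}(v)\leq\mathcal E_{C_R}(w)\leq CR^{n-1}\log R$, which together with the automatic inequality proves the theorem.

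The hard part will be Step~1: turning monotonicity and the one-sided limit into the minimality of $v$ against competitors squeezed between $v$ and $1$ — this is the substitute, in the present weaker hypotheses, for ``global minimizer,'' and it is where \eqref{hp-monot} and \eqref{limit} are genuinely used. The only other delicate point is the design of the $\lambda$-adapted cutoff $\eta$: one has to check that the balance between collar width $\sim 1+\lambda$ and gradient $\sim(1+\lambda)^{-1}$ yields precisely the borderline integral $\int_0^R d\lambda/(1+\lambda)$ and nothing larger. Note that this simple competitor relies on $G(\pm1)=0$ (Allen--Cahn type $f$) and on the solution reaching the value $1$; for general $f$ and general monotone solutions it is unavailable, which is why Theorems~\ref{energy-est} and \ref{energy-dim3} instead proceed through the $H^{1/2}$ estimate \eqref{H-est} and the key Theorem~\ref{key}.
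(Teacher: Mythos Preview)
Your argument is correct, but it is \emph{not} the argument of Ambrosio--Cabr\'e (nor the one the paper gives here), despite your opening sentence. The paper's proof never establishes any minimality of $v$ and builds no competitor. Instead it differentiates the energy along the translates: with $v^t(x,\lambda)=v(x',x_n+t,\lambda)$ one computes $\partial_t\mathcal E_{C_R}(v^t)$ by integrating by parts, uses that $v^t$ solves \eqref{eq2} to reduce to boundary integrals over $\partial B_R\times(0,R)$ and $B_R\times\{\lambda=R\}$, and then uses $\partial_t v^t>0$ together with $|\nabla v^t|\le C/(1+\lambda)$ to bound these terms from below by $-C\int_0^R(1+\lambda)^{-1}\,d\lambda\int_{\partial B_R}\partial_t v^t-CR^{-1}\int_{B_R\times\{\lambda=R\}}\partial_t v^t$. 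Integrating in $t$ from $0$ to $T$, the $t$-integral of $\partial_t v^t$ telescopes to $v^T-v^0\le 2$, giving $\mathcal E_{C_R}(v)\le \mathcal E_{C_R}(v^T)+CR^{n-1}\log R+CR^{n-1}$; letting $T\to\infty$ and using $\mathcal E_{C_R}(v^T)\to 0$ (from \eqref{limit}) finishes the proof.

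So what you call ``the hard part'' --- Step~1, the one-sided minimality of $v$ among competitors squeezed between $v$ and $1$ --- is simply absent from the paper's route; the differentiation trick bypasses it. Your Step~1 is essentially Proposition~\ref{monot-min} (with $\overline v\equiv 1$), which the paper proves separately and uses only for the harder Theorem~\ref{energy-dim3} where the direct trick is not immediately available. Your approach thus lies closer in spirit to the general scheme of Theorems~\ref{energy-est}--\ref{energy-dim3} (minimality plus comparison), but with the heavier $H^{1/2}$ machinery replaced by an explicit $\lambda$-adapted cutoff; this shortcut works precisely because $G(1)=0$ and \eqref{limit} hold. What the paper's route buys is brevity and self-containedness: no minimality lemma, no competitor design, just a one-page computation. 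What your route buys is a more modular argument that plugs directly into the framework of Section~5, and your observation that the collar of width $\sim 1+\lambda$ yields exactly $\int_0^R(1+\lambda)^{-1}\,d\lambda$ is a nice elementary substitute for Theorem~\ref{key} under the present extra hypotheses.
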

\begin{remark}\label{Rk2.2}
This energy estimate in dimension $n=1$ has been proven by the first author and
Sol\`a-Morales \cite{C-SM}, using the gradient bound
\begin{equation}\label{grad-layer}
|\nabla v(x,\lambda)|\leq \frac{C}{1+|(x,\lambda)|}\;\;\mbox{for
all}\;\;x\in \re\;\mbox{and}\;\lambda\geq 0,\end{equation} (see estimate (1.14) of \cite{C-SM}). 
Indeed, we next see that
(\ref{grad-layer}) leads to
$$\int_{C_R}|\nabla v|^2 dx d\lambda\leq C \log R$$
and also
\begin{equation}\label{infinit-cyl}\int_0^{+\infty}d\lambda\int_{B_R}dx |\nabla v|^2  \leq C
\log R.\end{equation} That is, for  $n=1$, the energy
estimate holds not only in the cylinder $C_R$, but also in the infinite
cylinder $B_R\times (0,+\infty)$.
Let us mention that for the explicit layer solutions in section 2.1 of \cite{C-SM}, the upper bound $C(1+|(x,\lambda)|)^{-1}$ for $|\nabla v|$ is also a lower bound for $|\nabla v|$, modulo a smaller multiplicative constant. As a consequence, the following computation shows that
the two previous upper bounds $\log R$ are also lower bounds for the Dirichlet energy 
after multiplying $\log R$ by a smaller constant.

Estimate \eqref{infinit-cyl}
holds, indeed:
\begin{eqnarray*}
\int_0^{+\infty}d\lambda \int_{-R}^R dx |\nabla v|^2 &\leq& C
\int_0^{+\infty} d\lambda \int_{-R}^R dx\frac{1}{1+x^2+\lambda^2}
\\
&\leq & C \int_{-R}^R dx \int_0^{+\infty}d\lambda
\frac{1}{(1+x)^2}\cdot\frac{1}{1+\left(\frac{\lambda}{1+x}\right)^2}\\
&\leq&
C\int_{-R}^R\left[\frac{1}{1+x}\arctan{\frac{\lambda}{1+x}}\right]_{\lambda=0}^{\lambda=+\infty}dx
\\
&\leq& C\int_{-R}^R \frac{\pi}{2} \frac{1}{1+x}dx\leq C \log
R.\end{eqnarray*}

In higher dimensions, an analog of
(\ref{grad-layer}) is not available and therefore we need another
method to prove Theorem \ref{energy-layer}.
\end{remark}

\begin{proof}[Proof of Theorem \ref{energy-layer}]
We follow an argument found by
Ambrosio and the first author \cite{AC} to prove the energy estimate for layer
solutions of the analog problem $-\Delta u=f(u)$ in $\re^n$. It is based on
sliding the function $v$, which is the harmonic extension of the
solution $u$, in the direction $x_n$.

Consider the function
$$v^t(x,\lambda):=v(x',x_n+t,\lambda)$$ defined for $(x,\lambda)=(x',x_n,\lambda)\in
\re^{n}\times [0,+\infty)$, where $t\in \re$. For each $t$ we have
\begin{equation}
\begin{cases}
\Delta v^t=0& \text{in}\; \re_{+}^{n+1},\\
-\partial_{\lambda}v^t=f(v^t)& \text{on}\; \re^{n}=\partial
\re_{+}^{n+1}.
\end{cases}
\end{equation}
Moreover, as stated in \eqref{grad_CSM}, the following bounds hold:
\begin{equation}\label{bound-t}|v^t|\leq 1\quad \mbox{and}\quad |\nabla v^t|\leq
\frac{C}{1+\lambda}.\end{equation} Throughout the proof, $C$ will denote
different positive constants which depending only on $n$ and $||f||_{C^1([-1,1])}$.

A simple compactness argument implies that
\begin{equation}\label{lim}\lim_{t\rightarrow +\infty}\{ |v^t-1|+|\nabla v^t|\}=0\end{equation} uniformly in compact sets of $\overline{\re_+^{n+1}}$.
Indeed, arguing by contradiction, assume that there exist $R>0$, $\varepsilon>0$, and a sequence $t_m\rightarrow \infty$ such that
\begin{equation}\label{ass}||v^{t_m}-1||_{L^{\infty}(C_R)}+||\nabla v^{t_m}||_{L^{\infty}(C_R)}\geq \varepsilon\end{equation}
for every $m$, where $C_R=B_R\times (0,R)$. Since $v^{t_m}$ are all solutions of \eqref{eq2} in all the halfspace, the regularity results in \cite{C-SM} give $C^2_{{\rm loc}}(\overline{\re_+^{n+1})}$ estimates for $v^{t_m}$ uniform in $m$. Thus, there exists a subsequence that converges in $C^2_{\text{{\rm loc}}}(\overline {\re^{n+1}_+})$ to a bounded harmonic function $v^{\infty}$. By hypothesis \eqref{limit}, $v^{\infty}\equiv 1$ on $\partial \re^{n+1}_+$, and thus by the maximum principle, $v^{\infty}\equiv 1$ in all of $\re^{n+1}_+$. This contradicts \eqref{ass}, by $C^1$ convergence in compact sets of $v^{t_m}$ towards $v^{\infty} \equiv 1$.

Denoting the derivative of $v^t(x,\lambda)$ with respect to $t$ by
$\partial_t v^t(x,\lambda)$, we have
$$\partial_t v^t(x,\lambda)=v_{x_n}(x',x_n+t,\lambda)>0\quad
\mbox{for all}\;\;x \in \re^n,\;\lambda\geq 0.$$ 
Note that $v_{x_n}>0$, since $v_{x_n}$ is the harmonic extension of the bounded function $u_{x_n}>0$.
We consider the
energy of $v^t$ in the cylinder $C_R=B_R\times (0,R)$,
$${\mathcal E}_{C_R}(v^t)=\int_{C_R}\frac{1}{2}|\nabla v^t|^2dx
d\lambda + \int_{B_R}G(v^t)dx.$$ Note that, by 
(\ref{lim}), we have
\begin{equation}\label{lim-en-t}\lim_{t\rightarrow
+\infty}\mathcal{E}_{C_R}(v^t)=0.
\end{equation} 

Next, we bound the derivative
of $\mathcal{E}_{C_R}(v^t)$ with respect to $t$. We use that $v^t$
is a solution of problem (\ref{eq2}), the bound \eqref{bound-t} for $|v^t|$ and
$|\nabla v^t|$, and the crucial fact that $\partial_t v^t>0$. Let
$\nu$ denote the exterior normal to the lateral boundary $\partial
B_R\times (0,R)$ of the cylinder $C_R$. We have
\begin{eqnarray*}
\partial_t\mathcal{E}_{C_R}(v^t)&=&\int_0^R d\lambda
\int_{B_R}dx\ \nabla v^t\cdot \nabla (\partial_t v^t)
+\int_{B_R}dx\ G'(v^t)\partial_t v^t\\
&=& \int_0^Rd\lambda \int_{\partial B_R}d\sigma\ \frac{\partial
v^t}{\partial \nu}
\partial_t v^t+\int_{B_R\times \{\lambda=R\}}dx\ \frac{\partial
v^t}{\partial\lambda}
\partial_t v^t\\
&\geq& -C\int_0^R \frac{d\lambda}{1+\lambda}\int_{\partial
B_R}d\sigma\ \partial_t v^t -\frac{C}{R}\int_{B_R\times
\{\lambda=R\}}dx\ \partial_t v^t.\end{eqnarray*} Hence, for
every $T>0$, we have
\begin{eqnarray*}
\mathcal{E}_{C_R}(v)&=&\mathcal{E}_{C_R}(v^T)-\int_0^T dt\ \partial_t
\mathcal{E}_{C_R}(v^t) \\
&\leq& \mathcal{E}_{C_R}(v^T)+ C\int_0^Tdt \int_0^R
\frac{d\lambda}{1+\lambda}\int_{\partial B_R}d\sigma\ \partial_t v^t
+\frac{C}{R}\int_0^T dt\int_{B_R\times
\{\lambda=R\}}dx\ \partial_t v^t  \\
&=& \mathcal{E}_{C_R}(v^T)+ C\int_0^R \frac{d\lambda}{1+\lambda}
\int_{\partial B_R}d\sigma\int_0^T dt\ \partial_t v^t 
+\frac{C}{R}\int_{B_R\times
\{\lambda=R\}}dx \int_0^T dt\ \partial_t v^t \\
&=& \mathcal{E}_{C_R}(v^T)+ C\int_0^R \frac{d\lambda}{1+\lambda}
\int_{\partial B_R}d\sigma\ (v^T-v^0)
+\frac{C}{R}\int_{B_R\times
\{\lambda=R\}}dx\ (v^T-v^0)\\
 &\leq&\mathcal{E}_{C_R}(v^T) + CR^{n-1}\log R +
 CR^{n-1}.\end{eqnarray*}
 Letting $T \rightarrow + \infty$ and using \eqref{lim-en-t}, we obtain the desired estimate.
\end{proof}

\section{$H^{1/2}$ estimate}

In this section we recall some definitions and properties  about
the spaces $H^{1/2}(\re^n)$ and $H^{1/2}(\partial \Omega)$, where
$\Omega$ is a bounded subset of $\re^{n+1}$ with Lipschitz
boundary $\partial \Omega$ (see
\cite{LM}).

$H^{1/2}(\re^n)$ is the space of functions $u\in L^2(\re^n)$ such
that
$$\int_{\re^n}\int_{\re^n}\frac{|u(x)-u(\overline{x})|^2}{|x-\overline{x}|^{n+1}}dx
d\overline{x}< +\infty,$$ equipped with the norm

$$||u||_{H^{1/2}(\re^n)}=\left(||u||^2_{L^2(\re^n)}+ \int_{\re^n}\int_{\re^n}\frac{|u(x)-u(\overline{x})|^2}{|x-\overline{x}|^{n+1}}dx
d\overline{x}\right)^\frac{1}{2}.$$

Let now $\Omega$ be a bounded subset of $\re^{n+1}$ with
Lipschitz boundary $\partial \Omega$. To define $H^{1/2}(\partial
\Omega)$, consider an atlas $\{(O_j,\varphi_j);j=1,...,m\}$ where
$\{O_j\}$ is a family of open bounded sets in $\re^{n+1}$ such
that $\{O_j\cap\partial \Omega; j=1,...,m\}$ cover $\partial \Omega$. The functions
$\varphi_j$ are bilipschitz diffeomorphisms such that
\begin{itemize}
\item $\varphi_j:O_j\rightarrow U:=\{(y,\mu)\in
\re^{n+1}:|y|<1,\:-1<\mu<1\}$, \item

$\varphi_j:O_j\cap \Omega \rightarrow U^+:=\{(y,\mu)\in
\re^{n+1}:|y|<1,\:0<\mu<1\}$,

\item $\varphi_j:O_j\cap\partial \Omega \rightarrow
\{(y,\mu)\in \re^{n+1}:|y|<1,\mu=0\}$.
\end{itemize}

Let $\{\alpha_j\}$ be a partition of unity on $\partial \Omega$
such that $0\leq \alpha_j \in C^{\infty}_c(O_j),
\:\sum_{j=1}^m \alpha_j=1$ in $\partial \Omega$. If $u$ is a function on $\partial
\Omega$ decompose $u=\sum_{j=1}^m u\alpha_j $ and define the function
$$(u\alpha_j)\circ\varphi_j^{-1}(y,0):=(u\alpha_j
)(\varphi_j^{-1}(y,0)),\quad \mbox{for every}\;(y,0)\in U\cap
\{\mu=0\}.$$
Since $\alpha_j$ has compact support in $O_j \cap\partial \Omega$,
the function $(u\alpha_j)\circ\varphi_j^{-1}(\cdot,0)$ has compact support in
$U\cap \{\mu=0\}$ and therefore we may consider
$((u\alpha_j)\circ\varphi_j^{-1})(\cdot,0)$ to be defined in $\re^n$ extending it
by zero out of $U\cap \{\mu=0\}$. 
Now we define
$$H^{1/2}(\partial \Omega):=\{u:(u\alpha_j)\circ\varphi_j^{-1}(\cdot,0)\in
H^{1/2}(\re^n),\:j=1,...,m\}$$ equipped with the norm
$$\left(\sum_{j=1}^m ||(u\alpha_j)\circ\varphi_j^{-1}(\cdot,0)||_{H^{1/2}(\re^n)}^2\right)^{\frac{1}{2}}.$$ 

All these norms are
independent of the choice of the
system of local maps $\{O_j,\varphi_j\}$ and of the partition of
unity $\{\alpha_j\}$, and are all equivalent to
$$||u||_{H^{1/2}(\partial \Omega)}:=\left(||u||^2_{L^2(\partial
\Omega)}+\int_{\partial \Omega}\int_{\partial
\Omega}\frac{|u(z)-u(\overline{z})|^2}{|z-\overline{z}|^{n+1}}d\sigma_{z}
d\sigma_{\overline{z}}\right)^{\frac{1}{2}}.$$

We recall now the classical extension result that we will use in the proof of Theorem~\ref{energy-est}.

\begin{proposition}\label{extension}
Let $\Omega=\re^{n+1}_+$ or $\Omega$ be a bounded subset of $\re^{n+1}$ with Lipschitz
boundary $\partial \Omega$, and let $w$ belong to $H^{1/2}(\partial
\Omega)$.

Then, there exists a Lipschitz extension $\widetilde{w}$ of $w$ in
$\overline \Omega$ such that

\begin{equation}\label{H-est-th}
\int_{\Omega}|\nabla \widetilde{w}|^2 \leq C
||w||^2_{H^{1/2}(\partial \Omega)},\end{equation} where $C$ is a
constant depending only on $\Omega$.
\end{proposition}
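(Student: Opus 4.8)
\textbf{Proof strategy for Proposition \ref{extension}.}

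The plan is to reduce the case of a general bounded Lipschitz domain $\Omega$ to the model halfspace situation by means of the atlas $\{(O_j,\varphi_j)\}$ used above to define $H^{1/2}(\partial\Omega)$, and then to do the actual extension construction in the half-ball $U^+$. First I would treat the flat model: given $g\in H^{1/2}(\re^n)$, extend it to $\re^{n+1}_+$ by any of the classical linear extension operators --- the cleanest for our purposes is the Poisson (harmonic) extension, for which the identity $\int_{\re^{n+1}_+}|\nabla \overline g|^2 = c_n [g]_{\dot H^{1/2}(\re^n)}^2$ holds, where $[\cdot]$ denotes the Gagliardo seminorm; alternatively one may use the extension $\widetilde g(x,\lambda)=(\eta(\lambda/\cdot)$-type convolution mollification$)$, which is Lipschitz when $g$ is and satisfies the same energy bound. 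Either way one obtains a bounded linear map $E:H^{1/2}(\re^n)\to \dot H^1(\re^{n+1}_+)$ with $\|\nabla Eg\|_{L^2}^2\le C\|g\|_{H^{1/2}(\re^n)}^2$, and (for the mollification choice) mapping Lipschitz functions to Lipschitz functions.

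Next I would globalize. Using the partition of unity $\{\alpha_j\}$ subordinate to $\{O_j\}$, write $w=\sum_j w\alpha_j$ on $\partial\Omega$. Each piece $w\alpha_j$, transported by $\varphi_j$, becomes a function $g_j:=(w\alpha_j)\circ\varphi_j^{-1}(\cdot,0)$ compactly supported in $\{|y|<1,\mu=0\}\subset\re^n$, and by definition of the norm $g_j\in H^{1/2}(\re^n)$. Apply the flat extension $E$ to get $\widetilde g_j$ on $U^+$, which we may cut off to have support in $U$ (multiplying by a fixed cutoff $\chi\in C_c^\infty(U)$ equal to $1$ on the support-region of the $g_j$'s; this only changes the $\dot H^1$ norm by a constant). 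Then pull back, $\widetilde w_j := \widetilde g_j\circ\varphi_j$, which lives in $\overline{O_j\cap\Omega}$ and extends by zero to all of $\overline\Omega$; since $\varphi_j$ is bilipschitz, $\int_{\Omega}|\nabla\widetilde w_j|^2 \le C(\varphi_j)\int_{U^+}|\nabla\widetilde g_j|^2 \le C\|g_j\|_{H^{1/2}(\re^n)}^2\le C\|w\|_{H^{1/2}(\partial\Omega)}^2$. Finally set $\widetilde w := \sum_{j=1}^m \widetilde w_j$. On $\partial\Omega$ this sum restricts to $\sum_j w\alpha_j = w$, so $\widetilde w$ is an extension; it is Lipschitz as a finite sum of Lipschitz functions; and $\int_\Omega|\nabla\widetilde w|^2 \le m\sum_j \int_\Omega|\nabla\widetilde w_j|^2 \le C(\Omega)\|w\|_{H^{1/2}(\partial\Omega)}^2$, which is \eqref{H-est-th}. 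The case $\Omega=\re^{n+1}_+$ is just the flat model itself, with no partition of unity needed.

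The main technical obstacle is the flat model extension, specifically verifying that one can produce an extension which is \emph{simultaneously} Lipschitz (when $w$ is) and energy-controlled by the $H^{1/2}$ seminorm --- the harmonic extension gives the sharp energy identity but its Lipschitz regularity up to the boundary requires knowing $w$ is Lipschitz and a boundary gradient estimate, whereas a mollification-type extension $\widetilde g(x,\lambda)=\int \rho_\lambda(x-y)g(y)\,dy$ with $\rho_\lambda(\cdot)=\lambda^{-n}\rho(\cdot/\lambda)$ is manifestly Lipschitz for Lipschitz $g$ but requires the standard (and slightly computational) estimate $\int_{\re^{n+1}_+}|\nabla\widetilde g|^2\,dx\,d\lambda \le C[g]_{\dot H^{1/2}(\re^n)}^2$, proved by writing $\partial_\lambda\widetilde g$ and $\nabla_x\widetilde g$ as convolutions of $g$ against kernels of mass zero and using Fourier/Plancherel or a direct Gagliardo-type computation. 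A secondary, purely bookkeeping nuisance is checking that the cutoff $\chi$ and the partition-of-unity factors do not destroy the energy bound --- this is routine since multiplication by a fixed Lipschitz compactly supported function is bounded on $\dot H^1$ of a bounded set. I would present the mollification extension in detail (since, as the authors note, the proof is reused in \cite{C-Cinti2}), and only remark on the harmonic alternative.
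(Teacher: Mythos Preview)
Your proposal is correct and follows essentially the same approach as the paper: the paper's Case~1 is precisely the mollification extension $\widetilde\zeta(x,\lambda)=\int\lambda^{-n}K((x-\overline x)/\lambda)\zeta(\overline x)\,d\overline x$ with the energy bound proved via the zero-mass kernel / direct Gagliardo computation you describe, and Case~2 is the partition-of-unity globalization with cutoffs $\beta_j$ and bilipschitz charts exactly as you outline. The only cosmetic difference is that the paper applies the cutoff $\beta_j$ after pulling back to $O_j$ rather than in $U^+$, which is immaterial.
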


For the sake of completeness (and since the proof will be important in our next paper~\cite{C-Cinti2}) we give the proof of this proposition.

\begin{proof}[Proof of Proposition \ref{extension}]
{\em Case 1}. $\Omega=\re^{n+1}_+$. Let $\zeta$ be a function belonging to
$H^{1/2}(\re^n)$. We prove that there exists a Lipschitz extension
$\widetilde{\zeta}$ of $\zeta$ in $\overline{\re_+^{n+1}}$ such that
\begin{equation}\label{ext}\int_{\re^{n+1}_+}|\nabla \widetilde{\zeta}|^2dx d\lambda\leq
C\int_{\re^n}\int_{\re^n}\frac{|\zeta(x)-\zeta(\overline{x})|^2}{|x-\overline{x}|^{n+1}}dx
d\overline{x}.\end{equation}

Let $K(x)$ be a nonnegative $C^{\infty}$ function defined on $\re^n$ with
compact support in $B_1$ and such that $\int_{\re^n} K(x)dx=1$.
Define $\widetilde{K}(x,\lambda)$ on $\re^{n+1}_+$ by
$$\widetilde{K}(x,\lambda):=\frac{1}{\lambda^{n}}K\left(\frac{x}{\lambda}\right).$$
Then, since
\begin{equation}\label{int=1}\int_{\re^n}\widetilde{K}(x,\lambda)dx=1\;\;\mbox{for all}\;\lambda>0,\end{equation}
we obtain, differentiating with respect to $x_i$ and $\lambda$,
\begin{equation}\label{int-K}\int_{\re^n}\partial_{x_i}\widetilde{K}(x,\lambda)dx=0\;\;\mbox{and}\;\;\int_{\re^n}\partial_{\lambda}\widetilde{K}(x,\lambda)dx=0
\;\;\mbox{for all}\;\lambda>0.\end{equation}

In addition, for a constant $C$ depending only on $n$, we have  $$|\nabla \widetilde{K}(x,\lambda)|\leq
\frac{C}{\lambda^{n+1}}\quad \mbox{for all}\; (x,\lambda)\in \re^{n+1}_+.$$
This holds, since the support of $\widetilde K$ is contained in $\{|x|<\lambda\}$ and, in this set,
$$|\nabla_x \widetilde{K}(x,\lambda)|\leq
\frac{C}{\lambda^{n+1}}\;\;\mbox{and}$$
$$|\partial_{\lambda}K(x,\lambda)|=
\left|-\frac{n}{\lambda^{n+1}}K\left(\frac{x}{\lambda}\right)-\frac{1}{\lambda^n}\nabla
K\left(\frac{x}{\lambda}\right)\cdot
\frac{x}{\lambda^2}\right|\leq \frac{C}{\lambda^{n+1}}.$$
Now we define the extension $\widetilde{\zeta}$ as
$$\widetilde{\zeta}(x,\lambda)=\int_{\re^n}\widetilde{K}(x-\overline{x},\lambda)\zeta(\overline{x})d\overline{x},$$
and we show that this function satisfies (\ref{ext}). Note also that, by \eqref{int=1}, for every $\lambda \geq 0$
\begin{equation}\label{L2}
||\widetilde \zeta(\cdot,\lambda)||_{L^2(\re^n)}\leq ||\zeta||_{L^2(\re^n)}.\end{equation}

To show \eqref{ext}, observe that, by \eqref{int-K},
\begin{equation*}
\partial_{x_i}\widetilde{\zeta}(x,\lambda)
=\int_{\re^n}\partial_{x_i}\widetilde{K}(x-\overline{x},\lambda)\zeta(\overline{x})d\overline{x}
=\int_{\re^n}\partial_{x_i}\widetilde{K}(x-\overline{x},\lambda)
\{\zeta(\overline{x})-\zeta(x)\}d\overline{x},\end{equation*}
and thus
$$|\partial_{x_i}\widetilde{\zeta}(x,\lambda)|\leq C\int_{\{|x-\overline{x}|<\lambda\}}
\frac{|\zeta(\overline{x})-\zeta(x)|}{\lambda^{n+1}}d\overline{x}.$$
In the same way
$$ |\partial_{\lambda}\widetilde{\zeta}(x,\lambda)|\leq C \int_{\{|x-\overline{x}|<\lambda\}}
\frac{|\zeta(\overline{x})-\zeta(x)|}{\lambda^{n+1}}d\overline{x}.$$

Hence, by Cauchy-Schwarz,
\begin{eqnarray*} |\nabla
\widetilde{\zeta}(x,\lambda)|^2\leq C\int_{\{|x-\overline{x}|<\lambda\}}
\frac{|\zeta(\overline{x})-\zeta(x)|^2}{\lambda^{n+2}}d\overline{x},\end{eqnarray*}
and then
\begin{eqnarray*} \int_{\re^{n+1}_+}|\nabla \widetilde{\zeta}|^2 dx
d\lambda&\leq& C\int_0^{+\infty} d\lambda\int_{\re^n} dx \int
_{\{|x-\overline{x}|<\lambda\}}d\overline{x}
\frac{|\zeta(\overline{x})-\zeta(x)|^2}{\lambda^{n+2}}\\
&\leq& C\int_{\re^n}dx
\int_{\re^n}d\overline{x}\int_{\{\lambda>|x-\overline{x}|\}}d\lambda
\frac{|\zeta(\overline{x})-\zeta(x)|^2}{\lambda^{n+2}}
\\
&\leq&
C\int_{\re^n}\int_{\re^n}\frac{|\zeta(x)-\zeta(\overline{x})|^2}{|x-\overline{x}|^{n+1}}dx
d\overline{x}.\end{eqnarray*}

{\em Case 2}. Consider now the general case of a function $w$
belonging to $H^{1/2}(\partial\Omega)$, where $\Omega$ is a
bounded subset of $\re^{n+1}$ with Lipschitz boundary.

Using the partition of unity $\{\alpha_j\}$ introduced in the beginning of this section, we
write $w=\sum_{j=1}^m w \alpha_j$. Observe that, for every
$j=1,...,m$,
\begin{eqnarray}\label{*}\int_{\partial \Omega} \int_{\partial
\Omega}\frac{|(w\alpha_j)(z)-(w\alpha_j)(\overline{z})|^2}
{|z-\overline{z}|^{n+1}}d\sigma_{z} d\sigma_{\overline{z}}\leq C||w||^2_{H^{1/2}(\partial
\Omega)},\end{eqnarray} where all constants $C$ in the proof depend only on $\Omega$.
Indeed,
\begin{eqnarray}\label{**}
&&\int_{\partial \Omega} \int_{\partial
\Omega}\frac{|(w\alpha_j)(z)-(w\alpha_j)(\overline{z})|^2}
{|z-\overline{z}|^{n+1}}d\sigma_{z} d\sigma_{\overline{z}}  \nonumber \\
&& \hspace{2em}=\int_{\partial \Omega} \int_{\partial
\Omega}\frac{|(w\alpha_j)(z)-w(z)\alpha_j(\overline{z})+
w(z)\alpha_j(\overline{z})-(w\alpha_j)(\overline{z})|^2}
{|z-\overline{z}|^{n+1}}d\sigma_{z} d\sigma_{\overline{z}}\nonumber  \\
&& \hspace{2em}\leq  2\int_{\partial \Omega} \int_{\partial
\Omega}\frac{|\alpha_j(z)-\alpha_j(\overline{z})|^2|w(z)|^2}
{|z-\overline{z}|^{n+1}}d\sigma_{z} d\sigma_{\overline{z}} 
\\ && \hspace{3.5em}
+ 2 \int_{\partial \Omega} \int_{\partial
\Omega}\frac{|w(z)-w(\overline{z})|^2|\alpha_j(\overline{z})|^2}
{|z-\overline{z}|^{n+1}}d\sigma_{z} d\sigma_{\overline{z}}. \nonumber
\end{eqnarray}
The integral in \eqref{**} is bounded by $C||w||^2_{L^2(\partial \Omega)}$. Indeed, using that $\alpha_j$ is Lipschitz, we get that the integral in \eqref{**} is controlled by 
$$C\int_{O_j \cap \partial \Omega}|w(z)|^2 d\sigma_z
\int_{O_j \cap \partial \Omega}  \frac{d\sigma_{\overline z}}{|z-\overline z|^{n-1}}\leq C||w||^2_{L^2(\partial \Omega)},$$ 
where we have used spherical coordinates centered at $z$
(after flattening the boundary) in the last integral. From this, \eqref{*} follows.

We flatten the boundary $\partial \Omega$ using the local maps
$\varphi_j$ introduced in the beginning of this section, and consider the functions
$$\zeta_j(y,0):=(w\alpha_j)(\varphi_j^{-1}(y,0)),$$
which are defined for $(y,0) \in U \cap\{\mu=0\}.$ Now $\zeta_j(\cdot,0)$,
extended by 0 outside of $U \cap\{\mu=0\}$, is defined in all of
$\re^n$, and we are in the situation of case 1. We make the
extension $\widetilde{\zeta}_j$ of $\zeta_j$ as in case 1. Since $\alpha_j\in C_c^{\infty}(O_j)$, there exists a function $\beta_j\in C_c^{\infty}(O_j)$ such that $\beta_j\equiv 1$ in the support of $\alpha_j$. Thus $\beta_j(\widetilde \zeta_ j \circ \varphi_j)$, extended by zero outside of $O_j$, is well defined as a function in $\overline \Omega$ and agrees with $w\alpha_j=\beta_j w \alpha_j$ on $\partial \Omega$. We now define
$$\widetilde{w}=\sum_{j=1}^m\beta_j(\widetilde{\zeta}_j\circ \varphi_j) \quad 
\mbox{in}\:\: \overline \Omega,$$ 
which agrees with $w$ on $\partial \Omega$.

Observe that, since $\varphi_j$ is a bilipschitz map and $\alpha_j,\:\beta_j \in C_c^{\infty}(O_j)$ for every
$j=1,...,m$, we have
$$|\nabla \widetilde{w}|\leq C\sum_{j=1}^m \left\{|\nabla \beta_j|
|\widetilde{\zeta_j}\circ\varphi_j|+|\beta_j||(\nabla \widetilde{\zeta_j})\circ\varphi_j|\right\} ,$$ and thus
$$\int_{\Omega}
|\nabla \widetilde{w}|^2  \leq C
\sum_{j=1}^m  \left\{ ||\widetilde{\zeta_j}||^2_{L^2(B_1\times (0,1))} + \int_{\re^{n+1}_+}|\nabla
\widetilde{\zeta}_j|^2\right\}.$$ By \eqref{L2} and \eqref{ext} of case 1, we have for every $j=1,...m,$
\begin{eqnarray*}&&||\widetilde{\zeta_j}||^2_{L^2(B_1\times (0,1))}+\int_{\re^{n+1}_+}|\nabla \widetilde{\zeta}_j|^2
\leq C\left\{||\zeta_j||^2_{L^2(B_1)} + \int_{\re^n}\int_{\re^n}\frac{|\zeta_j(y)-\zeta_j(\overline y)|^2}{|y-\overline y|^{n+1}}dyd\overline y\right\}\\
&&\hspace{2em} \leq C\left\{||w||^2_{L^2(\partial\Omega)}+\int_{\re^n}\int_{\re^n}\frac{|\zeta_j(y)-\zeta_j(\overline y)|^2}{|y-\overline y|^{n+1}}dyd\overline y\right\}.\end{eqnarray*}  Finally, using that
$\varphi_j$ is a bilipschitz map for every $j=1,...,m$, the
definition of $\zeta_j$, and \eqref{*}, we get
\begin{eqnarray*}&&
\int_{\re^n}\int_{\re^n}\frac{|\zeta_j(y)-\zeta_j(\overline{y})|^2}{|y-\overline{y}|^{n+1}}dy
d\overline{y}\\
&&\hspace{2em} = 
\int_{B_1}\int_{B_1}\frac{|(w\alpha_j)(\varphi_j^{-1}(y,0))-(w\alpha_j)(\varphi_j^{-1}(\overline{y},0))|}
{|y-\overline{y}|^{n+1}}dy
d\overline{y}\\
&&\hspace{2em} \leq C \int_{O_j\cap \partial \Omega}\int_{O_j\cap \partial
\Omega}\frac{|(w\alpha_j)(z)-(w\alpha_j)(\overline{z})|}
{|\varphi_j(z)-\varphi_j(\overline{z})|^{n+1}}d\sigma_z
d\sigma_{\overline{z}}\\
&&\hspace{2em}\leq C\int_{O_j \cap \partial \Omega}\int_{O_j \cap\partial
\Omega}\frac{|(w\alpha_j)(z)-(w\alpha_j)(\overline{z})|}{|z-\overline{z}|^{n+1}}d\sigma_z
d\sigma_{\overline{z}}\leq  C||w||^2_{H^{1/2}(\partial
\Omega)}.
\end{eqnarray*}
\end{proof}

\begin{remark} Let $\overline w$ be the harmonic extension of $w$ in
$\Omega$. Since $\overline w$ is the extension with minimal $L^2(\Omega)$-norm of $\nabla \overline w$, then we have that
$$\int_{\Omega}|\nabla \overline w|^2 dx d\lambda \leq \int_{\Omega}|\nabla \widetilde w|^2 dx d\lambda
\leq C||w||_{H^{1/2}(\partial \Omega)}.$$
\end{remark}

We give now the proof of the crucial Theorem \ref{key}.

\begin{proof}[Proof of Theorem \ref{key}]
The proof consists of two steps.

{\em Step 1}. Suppose that 
$$A=Q_1=\{x\in \re^n:|x_i|<1\:\mbox{for all}\:i=1,...,n\}$$ is a
cube in $\re^n$, and that
$$\Gamma=\{x_n=0\}\cap Q_1,$$ where $x=(x',x_n)\in \re^{n-1}\times \re.$
 We may assume $c_0=1$ by replacing $w$ by $w/c_0$. By hypothesis we have that $|w|\leq 1$ in $A$ and that
\begin{equation}\label{bound}
\begin{cases} |D w(x)|\leq
1/\varepsilon & \mbox{for a.e.}\: x \in Q_1\;\mbox{with}\;|x_n|<\varepsilon  \\
|D w(x)|\leq 1/|x_n|\quad & \mbox{for a.e.}\: x \in
Q_1\;\mbox{with}\;|x_n|>\varepsilon.
\end{cases}\end{equation}  We need to estimate the $H^{1/2}$-norm of $w$ in $Q_1$, given by
\begin{eqnarray*}
||w||^2_{H^{1/2}(Q_1)}= ||w||^2_{L^2(Q_1)}+
\int_{Q_1}\int_{Q_1}\frac{|w(x)-w(\overline x)|^2}{|x-\overline
x|^{n+1}}dx d\overline x.
\end{eqnarray*}
All constants $C$ in step 1 depend only on $n$ and differ from line to line. In this step, we take $0<\varepsilon\leq 1/2$.

First observe that $||w||^2_{L^2(Q_1)}\leq 2^n$. Let $x\in
Q_1^+=\{x\in Q_1:x_n>0\}$ and let $R_x$ be a radius depending on
the point~$x$, defined by 
$$R_x=\begin{cases}\varepsilon&
\mbox{if}\;0<x_n<\varepsilon\\
x_n/2 & \mbox{if}\;\varepsilon<x_n<1.\end{cases}$$
To bound
$||w||_{H^{1/2}(Q_1)}$, we consider the two cases $\overline x\in
B_{R_x}(x)$ and $\overline x \notin B_{R_x}(x)$, as follows:
\begin{eqnarray*}&&\int_{Q_1^+}dx\int_{Q_1}d\overline x\frac{|w(x)-w(\overline x)|^2}{|x-\overline x|^{n+1}}=
\\
&& \hspace{2em}=\int_{Q_1^+}dx\int_{Q_1\cap B_{R_x}(x)}d\overline
x\frac{|w(x)-w(\overline x)|^2}{|x-\overline x|^{n+1}}
+\int_{Q_1^+}dx\int_{Q_1\setminus
B_{R_x}(x)}d\overline x\frac{|w(x)-w(\overline x)|^2}{|x-\overline
x|^{n+1}}\\
&& \hspace{2em}:= I_1+I_2.\end{eqnarray*}

We use $|w|\leq 1$ to bound $I_2$, and the gradient
estimate \eqref{bound} for $w$ to bound $I_1$. In both cases we use spherical
coordinates, centered at $x$, calling $r=|x-\overline x|$ the radial coordinate.
We have
\begin{eqnarray*}
I_2&\leq &\int_{Q_1^+}dx\int_{Q_1\setminus
B_{R_x}(x)}d\overline x\frac{4}{|x-\overline x|^{n+1}}
\leq C\int_{Q_1^+}dx\int_{R_x}^{2\sqrt n}dr \frac{1}{r^2}\\
&\leq&
C\int_{Q_1^+}\frac{1}{R_x}dx
=C\left(\int_0^{\varepsilon}\frac{1}{\varepsilon}
dx_n+\int_\varepsilon^1 \frac{2}{x_n}dx_n\right)\leq C|\log
\varepsilon|.\end{eqnarray*}

Next, we bound $I_1$. We have
\begin{eqnarray*}
I_1&=&\int_{Q_1^+}dx\int_{Q_1\cap B_{R_x}(x)}d\overline x\frac{|w(x)-w(\overline
x)|^2}{|x-\overline x|^{n+1}}=\int_{Q_1^+}dx\int_{Q_1\cap B_{R_x}(x)}d\overline x \frac{|D
w(y(x,\overline x))|^2}{|x-\overline x|^{n-1}},\\
\end{eqnarray*} where $y(x,\overline x)\in Q_1 \cap B_{R_x}(x)$ is a point of the
segment joining $x$ and $\overline x$.

Now, \eqref{bound} reads $|Dw(y)|\leq\min\{1/\varepsilon,1/|y_n|\}$ for a.e. $y\in Q_1$. We use the bound $|Dw(y)|\leq 1/\varepsilon$ when $0<x_n<\varepsilon$. For $\varepsilon<x_n<1$, since $y(x,\overline x)\in B_{R_x}(x)=B_{x_n/2}(x)$, we have $y_n(x,\overline x)\geq x_n-R_x=x_n/2$, and thus $|Dw(y(x,\overline x))|\leq 1/y_n(x,\overline x)\leq 2/x_n$.
Thus, using spherical coordinates centered at $x$,
\begin{eqnarray*}
I_1 &\leq& C\int_0^\varepsilon dx_n \int_0^{\varepsilon}dr
\frac{1}{\varepsilon^2}
+C\int_{\varepsilon}^1 dx_n \int_0^{x_n/2}dr\frac{4}{x_n^2}\\
&\leq& C+C\int_\varepsilon^1 \frac{1}{x_n}dx_n \leq C|\log
\varepsilon|. \end{eqnarray*}

Finally, for $x\in Q_1^-=\{x \in Q_1:x_n <0\}$ we proceed in the
same way, and thus we conclude the proof of step 1.

{\em Step 2}. Suppose now the general situation of the theorem: $A\subset \re^n$ is a bounded Lipschitz domain, or $A=\partial\Omega$, where $\Omega$ is an open bounded subset of $\re^{n+1}$
with Lipschitz boundary. Recall that $\Gamma \subset A$ is the boundary (relative to $A$) of a Lipschitz open (relative to $A$) subset $M$ of $A$. From now on, we denote by $B_{r}(p)$ the ball
in $\re^n$ or in $\re^{n+1}$ indifferently, since we are
considering together the cases $A\subset \re^n$ and $A=\partial
\Omega$ with $\Omega \subset \re^{n+1}$. We define a finite open
covering of $A$ in the following way. 

First, for every $p\in
\Gamma$, we choose a radius $r_p$ for which
there
exists a bilipschitz diffeomorphism $\varphi_{p}:{B}_{r_{p}}(p)\cap
A\rightarrow Q_1$, where $Q_1$ is the unit cube of $\re^n$, such that $\varphi({B}_{r_{p}}(p)\cap \Gamma)=\{x\in
Q_1:x_n=0\}$.

Let $\overline \Gamma$ be the closure of $\Gamma$ in $\re^n$ or $\re^{n+1}$. Only in the case $A\subset \re^n$, it may happen that $\overline \Gamma \setminus \Gamma \neq \emptyset$. In such case, for $p\in \overline \Gamma \setminus \Gamma$, there exists a radius $r_p$ and a bilipschitz diffeomorphism $\varphi_p: B_{r_p}(p)\rightarrow (-3,1)\times (-1,1)^{n-1}$ such that $\varphi_p(p)=(-1,0,...,0)$, $\varphi_p(B_{r_p}(p)\cap A)=Q_1=(-1,1)^{n}$ and $\varphi_p(B_{r_p}(p)\cap \Gamma)=Q_1\cap \{x_n=0\}$. Thus, these last two properties hold for $p\in \overline \Gamma \setminus \Gamma$, as for the points $p\in \Gamma$ treated before.

Since $\overline \Gamma$ is compact, we can cover it by a finite number $m$ of
open balls $B_{r_{p_i}/2}(p_i)$, $i=1,...,m$, with half the radius $r_{p_i}$.
We set
$A^{(1)}_{r_i/2}:={B}_{r_{p_i}/2}(p_i)\cap A$ and $A^{(1)}_{r_i}:={B}_{r_{p_i}}(p_i)\cap A$. Observe that the number $m$ of
balls and the Lipschitz constant of $\varphi_{p_i}$ depend only on $A$ and $\Gamma$, as all constants from now on.

Next, consider the compact set $\mathcal K:= \overline{A}\setminus
\bigcup_{i=1}^m A^{(1)}_{r_i/2}.$ For every $q\in \mathcal K$, take a radius $0<s_q\leq (2/3)\mbox{dist}(q,\Gamma)$ for which there exists a bilipschitz diffeomorphism $\varphi_q:B_{s_q}(q_j)\cap A \rightarrow Q_1$. This is possible both if $q\in A$ or if $q\in \partial A$. Cover $\mathcal K$ by $l$ balls $B_{s_{q_j}/2}(q_j)$, $j=1,...,l$, with center $q_j \in \mathcal K$ and half of the radius $s_{q_j}$. Set $A^{(2)}_{s_j/2}:={B}_{s_{q_j}/2}(q_j)\cap A$ and $A^{(2)}_{s_j}:={B}_{s_{q_j}}(q_j)\cap A$.

Thus,
$\{A_{r_i/2}^{(1)}, A_{s_j/2}^{(2)}\}$ is a finite open
covering of $A$. Set
$\varepsilon_0:=\min_{i,j}\{r_i/2,s_j/2,1/2\}.$ If $z$
and $\overline{z}$ are two points belonging to $A$ such that
$|z-\overline{z}|<\varepsilon_0$, then there exists a set $A^{(1)}_{r_i}$, or
$A^{(2)}_{s_j}$, such that both $z$ and $\overline{z}$ belong to
$A^{(1)}_{r_i}$, or to $A^{(2)}_{s_j}$. Hence we have
\begin{equation}\label{subset}\{(z,\overline{z})\in A\times A:|z-\overline{z}|<\varepsilon_0\} \subset\left(\bigcup_{i=1}^m A^{(1)}_{r_i} \times
A^{(1)}_{r_i}\right)\cup \left(\bigcup_{j=1}^l A^{(2)}_{s_j}
\times A^{(2)}_{s_j}\right).\end{equation} Observe that
\begin{equation}\label{dist-y}  
\mbox{dist}(y,\Gamma)\geq\mbox{dist}(q_j,\Gamma)-|y-q_j|\geq \frac{3}{2} s_{q_j}-s_{q_j} =
\frac{s_j}{2}\geq \varepsilon_0\:\:\mbox{for every}\:\:y\in A^{(2)}_{s_j}.
\end{equation}
Let $L>1$ be a bound for the Lipschitz norm of all the functions $\varphi_{p_1},...,\varphi_{p_m},\varphi_{p_1}^{-1},...,\varphi_{p_m}^{-1}.$

Now, let $w$ as in the statement of the theorem. Let us first treat the case $0<\varepsilon \leq 1/(2L).$ Since
\begin{eqnarray*}
\int_{A}d\sigma_z \int_{\{\overline{z}\in A:|z-\overline{z}|>\varepsilon_0\}}
d\sigma_{\overline z}\frac{|w(z)-w(\overline z)|^2}{|z-\overline
z|^{n+1}}\leq \frac{4c^2_0}{\varepsilon_0^{n+1}}|A|^2=Cc_0^2,\end{eqnarray*}
we only need to bound the double integral in $\{z \in A\}\times \{\overline z \in A: |z-\overline z|< \varepsilon_0\}$. By \eqref{subset}, it suffices to bound the integrals in each $ A_{r_i}^{(1)}\times A_{r_i}^{(1)} $ and in each $ A_{s_j}^{(2)}\times A_{s_j}^{(2)} $.

Thus, for every $i$, consider
\begin{equation*}\int_{A^{(1)}_{r_i}}\int_{A^{(1)}_{r_i}}\frac{|w(z)-w(\overline
z)|^2}{|z-\overline z|^{n+1}}d\sigma_{z}d\sigma_{\overline
z}.\end{equation*} Recall that, by construction, there exists a
bilipschitz map $\varphi_{p_i}:A^{(1)}_{r_i}\rightarrow Q_1$
such that $\varphi_{p_i}(\Gamma \cap A^{(1)}_{r_i})=\{x\in Q_1:x_n=0\}.$
Thus, flattening the set $A^{(1)}_{r_i}$ using $\varphi_{p_i}$, we are
in the situation of step 1.
More precisely, since $\varphi_{p_i}$ is bilipschitz, we have that
$$\int_{A^{(1)}_{r_i}}\int_{A^{(1)}_{r_i}}\frac{|w(z)-w(\overline
z)|^2}{|z-\overline z|^{n+1}}d\sigma_{z}d\sigma_{\overline
z}\leq C\int_{Q_1}\int_{Q_1}\frac{|w_i(x)-w_i(\overline
x)|^2}{|x-\overline x|^{n+1}}d{x}d{\overline
x},$$ where we have set $w_i=w\circ \varphi_{p_i}^{-1}$. 

Given $x \in Q_1$, let $y=\varphi_{p_i}^{-1}(x)\in A_{r_i}^{(1)}.$ Recalling the definition of the Lipschitz constant $L$ above, we have $|x_n|\leq L\mbox{dist}(y,\Gamma)$ and hence
\begin{eqnarray*}
|Dw_i(x)|&\leq& L|Dw(y)|\leq Lc_0\min\left\{\frac{1}{\varepsilon},\frac{1}{\mbox{dist}(y,\Gamma)}\right\}\\
&\leq& Lc_0 \min\left\{\frac{1}{\varepsilon},\frac{L}{|x_n|}\right\}=L^2c_0\min\left\{\frac{1}{\varepsilon L},\frac{1}{|x_n|}\right\}.
\end{eqnarray*}
Thus we can apply the result proved in Step 1, with $\varepsilon$ replaced by $\varepsilon L$ (note that we have $\varepsilon L \leq 1/2$, as in Step 1), to the function $w_i/[(1+L^2)c_0].$ We obtain the desired bound $Cc_0^2 |\log(\varepsilon L)|\leq Cc_0^2 |\log\varepsilon|$.

Next, we consider the double integral in $A_{s_j}^{(2)} \times A_{s_j}^{(2)} $, for any $j\in \{1,...,l\}$.
Recall that there exists a bilipschitz diffeomorphism $\varphi_{q_j}:A_{s_j}^{(2)}\rightarrow Q_1$. Thus
$$\int_{A^{(2)}_{s_j}}\int_{A^{(2)}_{s_j}}\frac{|w(z)-w(\overline
z)|^2}{|z-\overline z|^{n+1}}d\sigma_{z}d\sigma_{\overline
z}\leq C\int_{Q_1}\int_{Q_1}\frac{|v_j(x)-v_j(\overline
x)|^2}{|x-\overline x|^{n+1}}d\sigma_{x}d\sigma_{\overline
x},$$
where now $v_j:=w\circ \varphi_{q_j}^{-1}$. By \eqref{dist-y} and \eqref{bound_grad}, $|Dw(y)|\leq c_0/\varepsilon_0$ a.e. in $A_{s_j}^{(2)}$, and $|D v_j|\leq C$ a.e. in $Q_1$. From this, the last double integral is bounded by
$$C\int_{Q_1}dx\int_{Q_1}\frac {d\overline x}{|x-\overline x|^{n-1}}\leq C\int_{Q_1}dx \int_0^{2 \sqrt n}dr\leq C.$$
This conclude the proof in case $\varepsilon \leq 1/(2L)$. 

Finally, given $\varepsilon \in (0,1/2)$ with $\varepsilon > 1/(2L)$, since \eqref{bound_grad} holds with such $\varepsilon$, it also holds with $\varepsilon$ replaced by $1/(2L)$. By the previous proof with $\varepsilon$ taken to be $1/(2L)$, the energy is bounded by $C|\log (1/(2L))|\leq C\leq C|\log\varepsilon|$ since $\varepsilon<1/2$. \end{proof}

\section{Energy estimate for global minimizers}
In this section we give the proof of Theorem \ref{energy-est}. 
It is based on a comparison argument. The proof can be resumed
in 3 steps. Let $v$ be a global minimizer of $\eqref{eq2}$.
\begin{enumerate}
\item [i)] Construct a comparison function $\overline{w}$, harmonic in $C_R$, which
takes the same values as $v$ on $\partial^+ C_R=\partial C_R \cap \{\lambda >0\}$ and thus, by
minimality of $v$,
$$\mathcal E_{C_R}(v)\leq \mathcal E_{C_R}(\overline{w}).$$
\item [ii)] Use estimate \eqref{H-est}:
$$\int_{C_R}|\nabla \overline{w}|^2 \leq C||\overline w||^2_{H^{1/2}(\partial C_R)}.$$ 
\item [iii)] Establish, using Theorem \ref{key}, the key estimate
$$||\overline w||^2_{H^{1/2}(\partial C_R)}\leq C R^{n-1}\log R.$$
\end{enumerate}

\begin{proof}[Proof of Theorem \ref{energy-est}]
Let $v$ be a bounded global minimizer of \eqref{eq2}. Let $u$ be its trace on $\partial \re^{n+1}_+$. Recall the definition \eqref{c_u} of the constant $c_u$. Let $s\in [\inf u, \sup u]$ be such that $G(s)=c_u$.

Through the proof, $C$ denotes positive constants depending only on $n$, $||f||_{C^1([\inf u,\sup u])}$ and $||u||_{L^\infty(\re^n)}$. As explained in \eqref{grad_CSM}, $v$ satisfies the
following bounds:
\begin{equation}\label{bounds-v}|v|\leq C\quad \mbox{and}\quad |\nabla v(x,\lambda)|\leq
\frac{C}{1+\lambda}\quad \mbox{for every}\:\:x\in
\re^n,\;\;\lambda \geq 0.\end{equation} 

We estimate the energy $\mathcal{E}_{C_R}(v)$ of $v$
using a comparison argument.  We define a function $\overline w=\overline{w}(x,\lambda)$ on
$C_R$
 in the following way. First we define $\overline{w}(x,0)$
on the base of the cylinder to be equal to a smooth function $g(x)$ which is
identically equal to $s$ in $B_{R-1}$ and $g(x)=v(x,0)$ for
$|x|=R$. The function $g$ is defined as follows:
\begin{equation}\label{g}
 g=s\eta_R+(1-\eta_R)v(\cdot,0),
\end{equation}
where $\eta_R$ is a smooth function depending only on $r=|x|$ such that $\eta\equiv 1$ in $B_{R-1}$ and $\eta\equiv 0$ outside $B_R$. Thus, $g$ satisfies 
\begin{equation}\label{g1} g \in [\inf u, \sup u]\quad \mbox{and}\quad |\nabla g|\leq C\:\:\mbox{in}\:\:B_R.\end{equation}
Then we define $\overline{w}(x,\lambda)$ as the unique
solution of the Dirichlet problem
\begin{equation}\label{w}\begin{cases}
\Delta \overline{w}=0 & \mbox{in} \;C_R\\
\overline{w}(x,0)=g(x)& \mbox{on}\;B_R\times \{\lambda=0\}\\
 \overline{w}(x,\lambda)=v(x,\lambda)& \mbox{on}\;\partial
 C_R\cap \{\lambda >0\}.\end{cases}\end{equation}

Since $v$ is a global minimizer of $\mathcal{E}_{C_R}$ and
$\overline{w}=v$ on $\partial C_R\cap \{\lambda >0\}$, then
\begin{eqnarray}&&\int_{C_R}\frac{1}{2}|\nabla v|^2dx d\lambda
+\int_{B_R}\{G(u)-c_u\}dx \nonumber \\
&& \hspace{2em}\leq \int_{C_R}\frac{1}{2}|\nabla \overline{w}|^2dx
d\lambda +\int_{B_R}\{G(\overline{w}(x,0))-c_u\}dx.\nonumber
\end{eqnarray}
We prove next that
$$\int_{C_R}\frac{1}{2}|\nabla \overline{w}|^2dx d\lambda +\int_{B_R}\{G(\overline{w}(x,0))-c_u\}dx \leq C R^{n-1}\log R.$$

Observe that the potential energy is bounded by $CR^{n-1}$. Indeed,
by definition $\overline{w}(x,0)=s$ in $B_{R-1}$, and hence 
\begin{eqnarray*}&&\int_{B_R} \{G(\overline{w}(x,0))-c_u\}dx=\int_{B_R\setminus
B_{R-1}}\{G(g(x))-c_u\}dx\nonumber \\
&& \hspace{2em}\leq C |B_R\setminus B_{R-1}|\leq
CR^{n-1}.\end{eqnarray*}

Thus, we only need to bound the Dirichlet energy. First of all,
rescaling, we set
$$w_1(x,\lambda)=\overline{w}(Rx,R\lambda),$$ for
 $(x,\lambda)\in C_1=B_1\times(0,1)$. Set
$$\varepsilon=1/R.$$ 
Observe that
$$\int_{C_R}|\nabla \overline{w}|^2=CR^{n-1}\int_{C_1}|\nabla
w_1|^2.$$ Thus, we need to prove that
\begin{equation}\label{est-epsilon}\int_{C_1}|\nabla
w_1|^2 \leq C\log R=C|\log \varepsilon|.\end{equation}

Since $w_1$ is harmonic in $C_1$, Proposition \ref{extension} gives that
$$\int_{C_1}|\nabla w_1|^2dx d\lambda \leq C||w_1||_{H^{1/2}(\partial C_1)}.$$

To control $||w_1||_{H^{1/2}(\partial C_1)}$, we apply Theorem \ref{key} to ${w_1}_{|_{\partial C_1}}$ in $A=\partial C_1$, taking $\Gamma=\partial B_1\times \{\lambda=0\}$.

Since $|w_1|\leq C$, we only need to check \eqref{bound_grad} in $\partial C_1$. In the bottom boundary, $B_1 \times \{0\}$, this is simple. Indeed $w_1\equiv s$ in $B_{1-\varepsilon}$, and thus we need only to control $|\nabla w_1(x,0)|=\varepsilon^{-1}|\nabla g(Rx)|\leq C\varepsilon^{-1}$ for $|x|>1-\varepsilon$, by \eqref{g1}. Here $\mbox{dist}(x,\partial B_1)<\varepsilon$, and thus \eqref{bound_grad} holds here.

Next, to verify \eqref{bound_grad} in $\partial C_1 \cap \{\lambda>0\}$ we use that $\overline w=v$ here and that we know
$$|\nabla v(x,\lambda)|\leq \frac{C}{1+\lambda}\quad \mbox{for every}\:(x,\lambda)\in C_R,$$ 
as stated in \eqref{bounds-v}.
Thus, the tangential derivatives of $w_1$ in $\partial C_1 \cap \{\lambda>0\}$ satisfy
$$|\nabla w_1(x,\lambda)|\leq \frac{CR}{1+R\lambda}=\frac{C}{\varepsilon+\lambda}\leq C \min\left\{\frac{1}{\varepsilon},\frac{1}{\lambda}\right\}.$$
Since $\mbox{dist}((x,\lambda),\Gamma)\geq \lambda$ on $\partial C_1\cap \{\lambda >0\}$, ${w_1}_{|_{\partial C_1}}$
satisfies the hypotheses of Theorem \ref{key}. We conclude that
(\ref{est-epsilon}) holds.
\end{proof}

\section{Energy estimate for monotone solutions in $\re^3$}
The following lemma will play a key role in this section to establish the energy estimate for monotone solutions in dimension
$n=3$.

\begin{lemma}\label{lemma}
Let $f$ be a $C^{1,\beta}$  function, for some $0<\beta<1$, and
$u$ a bounded solution of equation (\ref{eq1}) in $\re^3$, such
that $u_{x_3}>0$. Let $v$ be the harmonic extension of $u$ in
$\re^{4}_+$. Set
$$\underline{v}(x_1,x_2,\lambda):=\lim_{x_3 \rightarrow
-\infty}v(x,\lambda)\;\;\mbox{and}\;\; \overline{v}(x_1,x_2,\lambda):=\lim_{x_3 \rightarrow +\infty}v(x,\lambda)
.$$

Then, $\underline{v}$ and $\overline{v}$ are solutions of \eqref{eq2} in $\re^3_+$, and each of them is either constant or
it depends only on $\lambda$ and one Euclidian variable in the
$(x_1,x_2)$-plane. As a consequence, each $\underline{u}=\underline v(\cdot,0)$ and $\overline{u}=\overline
v(\cdot,0)$ is either constant or 1-D.

Moreover, set $m=\inf
\underline{u}\leq \widetilde m =\sup
\underline{u}$ and $\widetilde{M}=\inf \overline{u}\leq M =\sup \overline{u}$. 
Then, $G >G(m)=
G(\widetilde{m})$ in $(m,\widetilde{m})$,
$G'(m)=G'(\widetilde{m})=0$ and $G > G(\widetilde{M})=G(M)$ in $(\widetilde{M},M)$,
$G'(\widetilde{M})=G'(M)=0.$
\end{lemma}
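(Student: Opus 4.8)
The plan is to analyze the limits $\underline v$ and $\overline v$ by passing to the limit in the translated solutions $v^t(x,\lambda)=v(x_1,x_2,x_3+t,\lambda)$, much as in the proof of Theorem~\ref{energy-layer}. First I would note that since $u_{x_3}>0$ and $u$ is bounded, the pointwise limits $\overline u(x_1,x_2)=\lim_{x_3\to+\infty}u(x)$ and $\underline u(x_1,x_2)=\lim_{x_3\to-\infty}u(x)$ exist. Using the uniform $C^2_{\mathrm{loc}}(\overline{\re_+^{n+1}})$ estimates for solutions of \eqref{eq2} (from \cite{C-SM}), the families $v^t$ are precompact, and by monotonicity in $x_3$ the full limits $v^t\to\overline v$ (as $t\to+\infty$) and $v^t\to\underline v$ (as $t\to-\infty$) exist in $C^2_{\mathrm{loc}}$. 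Hence $\overline v$ and $\underline v$ are bounded solutions of \eqref{eq2} in $\re^{n+1}_+$ that are independent of $x_3$, so they may be regarded as bounded solutions of \eqref{eq2} in $\re^3_+=\re^2\times(0,\infty)$, with traces $\overline u,\underline u$ on $\{\lambda=0\}$.

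The next step is the one-dimensional symmetry of these limit solutions. Here the key point is that $\overline v$ and $\underline v$ are \emph{stable} solutions of \eqref{eq2} in $\re^2\times(0,\infty)$: stability of $v$ (which holds because $v$ is monotone in $x_3$, with positive derivative $v_{x_3}$ serving as a positive solution of the linearized equation) passes to the limit, since $v^t_{x_3}>0$ solves the linearized problem around $v^t$ and the limit $\overline v_{x_n}\geq 0$; more directly, a bounded monotone solution is stable, and stability is preserved under $C^1_{\mathrm{loc}}$ limits. Then one invokes the 1-D symmetry result for bounded stable solutions of \eqref{eq1} (equivalently \eqref{eq2}) in dimension $2$ due to Cabr\'e and Sol\`a-Morales \cite{C-SM}: it yields that $\overline u$ (resp. $\underline u$) is either constant or a function of a single Euclidean variable $a\cdot(x_1,x_2)$, and correspondingly $\overline v$ (resp. $\underline v$) depends only on that variable and $\lambda$. (One should check that if $\overline v_{x_n}\equiv 0$ then either the profile is strictly monotone along some planar direction or it is constant — this is exactly the dichotomy in the cited 2-D result.)

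The final step concerns the potential $G$ along the ranges of $\underline u$ and $\overline u$. Since $\overline u$ is a bounded solution of $(-\Delta)^{1/2}\overline u=f(\overline u)$ in $\re^2$ that is either constant or 1-D increasing (after choosing orientation) from $\widetilde M=\inf\overline u$ to $M=\sup\overline u$, I would use the Hamiltonian identity / conserved energy for 1-D layer-type solutions of the half-Laplacian established in \cite{C-SM}: for the associated extension, the quantity $\tfrac12(\overline v_\mu^2-\overline v_\lambda^2)+$ (boundary potential terms) is constant, and letting the planar variable $\mu\to\pm\infty$ forces $G(M)=G(\widetilde M)$ together with $f(M)=f(\widetilde M)=0$, i.e. $G'(\widetilde M)=G'(M)=0$. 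The strict inequality $G>G(\widetilde M)=G(M)$ on $(\widetilde M,M)$ follows because otherwise the solution would have to be constant on a sub-interval, contradicting strict monotonicity, or by a direct argument: at an interior minimum point of the profile's value the potential cannot drop below the endpoint value without violating the ODE structure. The same applies to $\underline u$ on $(m,\widetilde m)$. I expect the main obstacle to be making rigorous the derivation of the endpoint relations $G(M)=G(\widetilde M)$, $G'=0$ — that is, setting up and justifying the Hamiltonian-type identity and the limiting argument for the nonlocal problem — since the energy-conservation mechanism is more delicate for the half-Laplacian than in the classical ODE case; everything else (compactness, passage to the limit, stability, and the 2-D symmetry input) is a fairly standard package, invoked directly from \cite{C-SM}.
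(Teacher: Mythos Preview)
Your approach is essentially the same as the paper's: pass to the limit in the translates $v^t$ to obtain $\overline v,\underline v$ as solutions of \eqref{eq2} in $\re^3_+$, establish their stability (the paper does this by testing the stability inequality for $v$ in $\re^4_+$ with $\xi(x,\lambda)=\eta(x',\lambda)\psi_\rho(x_3)$ and letting $\rho\to\infty$, which is the precise version of your ``stability passes to the limit''), and then invoke the 2-D symmetry result of \cite{C-SM}. One remark on your stability sketch: the route via $v^t_{x_3}>0$ does not directly survive the limit, since $\overline v_{x_3}\equiv 0$; the correct mechanism is the one you mention second, passing the quadratic-form inequality to the limit.

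The one place where you diverge is the final step. You anticipate having to set up and justify a Hamiltonian-type identity for the half-Laplacian to extract $G(\widetilde M)=G(M)$, $G'(\widetilde M)=G'(M)=0$, and $G>G(M)$ on $(\widetilde M,M)$, and you flag this as the main obstacle. In the paper this is a one-line citation: after noting that (when $\overline u$ is nonconstant) a linear rescaling of $\overline v$ is a layer solution for a new nonlinearity, the paper applies Theorem~1.2~a) of \cite{C-SM}, which characterizes exactly those nonlinearities admitting a 1-D layer solution and gives precisely the stated conditions on $G$. So there is no new analytical work to do here; your proposed Hamiltonian argument is in fact how that theorem is proved in \cite{C-SM}, but you need not reproduce it.
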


\begin{proof}
The function $\overline v(x',\lambda)=\lim_{x_3 \rightarrow +\infty} v(x',x_3,\lambda)$ is the harmonic extension of $\overline{u}$. The key point
of the proof is to verify that $\overline v$ is a stable solution
of problem \eqref{eq2} in $\re^3_+$ and then apply Theorem 1.5 of
\cite{C-SM} on 1-D symmetry in $\re^3_+$.

The fact that $\overline v$ is a solution of problem \eqref{eq2} in
$\re^3_+$ is easily verified viewing $\overline v$ as a function
of $4$ variables, limit as $t\rightarrow +\infty$ of the solutions
$v^t(x',x_3,\lambda)=v(x',x_3+t,\lambda)$. By standard elliptic
theory, $v^t \rightarrow \overline v$ uniformly in the $C^2$ sense
on compact sets of $\overline{\re_+^{4}}$.

Now we prove that
$\overline{v}(x',\lambda)$ is a stable solution of problem
(\ref{eq2}) in $\re^3_+$. By Lemma 4.1 of \cite{C-SM}, the
stability of $\overline{v}$ is equivalent to the existence of a
function $\varphi>0$ in $\overline{\re^3_+}$ which satisfies
\begin{equation}\label{phi}\begin{cases}
\Delta \varphi=0 & \mbox{in} \;\re_+^{3} \\
 -\frac{\partial \varphi}{\partial \lambda}=f'(\overline{v})\varphi & \mbox{on}\;\partial
 \re^{3}_+.\end{cases}\end{equation}
To check the existence of $\varphi>0$ satisfying \eqref{phi}, we
use that $v_{x_3}>0$  in $\overline{\re^3_+}$ and that satisfies the problem
\begin{equation}\label{linear}\begin{cases}
\Delta v_{x_3}=0 & \mbox{in} \;\re_+^{4}\nonumber \\
 -\frac{\partial v_{x_3}}{\partial \lambda}=f'(v)v_{x_3} & \mbox{on}\;\partial
 \re^{4}_+.\end{cases}\end{equation}
This gives that $v$ is stable in $\re^4_+$, i.e.
\begin{equation}\label{stab-n+1}
\int_{\re^{4}_+}|\nabla \xi|^2 dx
d\lambda-\int_{\re^3}f'(v)\xi^2dx \geq 0,\quad \mbox{for
every}\;\;\xi \in C^\infty_c(\overline{\re_+^{4}}).\end{equation}

Next, we claim that
\begin{equation}\label{stab-n}
\int_{\re^{3}_+}|\nabla \eta|^2 dx'
d\lambda-\int_{\re^{2}}f'(\overline{v})\eta^2dx' \geq 0,\quad
\mbox{for every}\;\;\eta \in
C^\infty_c(\overline{\re_+^{3}}).\end{equation}

To show this, we take $\rho>0$ and $\psi_{\rho}\in
C^{\infty}(\re)$ with $0\leq \psi_{\rho} \leq 1,\:0\leq
\psi_{\rho}'\leq 2,\;\psi_{\rho}=0$ in $(-\infty, \rho) \cup
(2\rho+2, +\infty)$, and $\psi_\rho=1$ in $(\rho+1, 2\rho+1)$, and
we apply \eqref{stab-n+1} with
$\xi(x,\lambda)=\eta(x',\lambda)\psi_\rho(x_3).$ We obtain after
dividing the expression by $\alpha_\rho=\int \psi_\rho^2$, that
\begin{eqnarray*}&&\int_{\re^3_+}dx' d\lambda\ |\nabla \eta(x',\lambda)|^2
+\int_{\re^3_+} dx'd\lambda\ \eta^2(x',\lambda)\int_\re dx_3\
\frac{(\psi_\rho')^2(x_3)}{\alpha_\rho}\\
&&
\hspace{2em}-\int_{\re^{3}_+}dx'd\lambda\ \eta^2(x',\lambda)\int_\re dx_3\
f'(v(x',x_3,0))\frac{\psi^2_\rho(x_3)}{\alpha_\rho}\geq
0\end{eqnarray*} 
Passing to the limit as $\rho \rightarrow
+\infty$, and using $f \in C^1$ and that $v(x',x_3,\lambda)
\rightarrow \overline v(x',\lambda)$ as $x_3 \rightarrow +\infty$
uniformly in compact sets of $\overline{\re^4}_+$, we obtain  \eqref{stab-n}.

Since $\overline{v}(x',\lambda)$ is a stable solution of problem
(\ref{eq2}) in $\re^3_+$, by Theorem 1.5 (point b) in
\cite{C-SM} we deduce that $\overline{v}$ is constant or
$\overline{v}$ depends only on $\lambda$ and one Euclidian variable in the $x'$-plane. Now note that the
function $2(\overline{v}-\widetilde{M})/(M-\widetilde{M})-1$ is a
layer solution for a new nonlinearity. Using Theorem 1.2 a) of
\cite{C-SM}, which characterizes the nonlinearities $f$ for which
there exists a layer solutions for problem \eqref{eq2} in
dimension $n=1$, and restating the conclusion for $\overline{v}$,
we get $G'(\widetilde{M})=G'(M)=0$ and $G>G(\widetilde{M})=G(M)$
in $(\widetilde{M},M)$.

In the same way, we prove that the conclusion holds for $\underline v$ and that $G'(\widetilde{m})=G'(m)=0$ and $G>G(\widetilde{m})=G(m)$
in $(m,\widetilde{m})$.
\end{proof}
\begin{remark}
We claim that in the case of Allen-Cahn type equations, we could
prove the energy estimate \eqref{en3} for monotone solutions in dimension $n=3$
using the same argument as in the proof of Theorem
\ref{energy-layer}. The only difficulty is that in this section we do not assume $\lim_{x_3\rightarrow +\infty}v=1$, and then we do not know if $\lim_{T\rightarrow \infty}
\mathcal{E}_{C_R}(v^T)=0$ (see \eqref{lim-en-t} in the proof of Theorem \ref{energy-layer}). Using Lemma \ref{lemma}, we have that
$\overline{u}(x_1,x_2)=\lim_{x_3\rightarrow +\infty}u(x_1,x_2,x_3)$
is either a constant or it depends only on one variable. Then,
applying Theorem 1.6 of \cite{C-SM}, which gives the energy bounds
for 1-D solutions, we deduce that $\lim_{T\rightarrow +\infty}\mathcal E_{C_R}(v^T)\leq CR^2\log R$, and this is enough to carry out the proof of Theorem \ref{energy-layer} in the present setting. \end{remark} Before giving the
proof of Theorem \ref{energy-dim3}, we need the following
proposition. It is the analog of Theorem 4.4 of \cite{AAC} and asserts that the monotonicity of a solution implies its minimality among a suitable family of functions.

\begin{proposition}\label{monot-min}
Let $f$ be any $C^{1,\beta}$ nonlinearity, with $\beta \in
(0,1)$. Let $u$ be a bounded solution of (\ref{eq1}) in $\re^n$
such that
$u_{x_n}>0$, and let $v$ be its harmonic extension in $\re_+^{n+1}$.\\
Then, \begin{eqnarray*}\int_{C_R}\frac{1}{2}|\nabla
v(x,\lambda)|^2
dx d\lambda&+&\int_{B_R}G(v(x,0))dx \\
&\leq &\int_{C_R}\frac{1}{2}|\nabla w(x,\lambda)|^2 dx
d\lambda+\int_{B_R}G(w(x,0))dx,\end{eqnarray*} for every $w\in
C^1(\overline{\re^{n+1}_+})$ such that $w=v$ on $\partial^+C_R=\partial C_R \cap \{\lambda>0\}$ and $\underline{v}\leq
w\leq \overline{v}$ in $C_R$, where $\underline{v}$ and
$\overline{v}$ are defined by
$$\underline{v}(x',\lambda):=\lim_{x_n \rightarrow
-\infty}v(x',x_n,\lambda)\;\;\mbox{and}\;\; \overline{v}(x',\lambda):=\lim_{x_n \rightarrow +\infty}v(x',x_n,\lambda)
.$$
\end{proposition}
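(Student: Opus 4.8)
The plan is to adapt the classical calibration argument of Alberti--Ambrosio--Cabr\'e to the Neumann setting of \eqref{eq2}. The heuristic is that, because $u$ is monotone in $x_n$, the solution $v$ foliates the slab $C_R$ into the level sets of the sliding family $v^t(x,\lambda)=v(x',x_n+t,\lambda)$, and each leaf is a solution of \eqref{eq2}; this foliation provides a null-Lagrangian (a calibration) that makes $v$ an absolute minimizer among competitors $w$ trapped between $\underline v$ and $\overline v$. Concretely, first I would let $P(x,\lambda)$ denote the value of the parameter $t$ for which $v^t(x,\lambda)$ passes through a given point, i.e. define $P$ implicitly by $v^{-P(x,\lambda)}(x,\lambda)=v(x',x_n-P,\lambda)=$ (base value); equivalently, since $\partial_{x_n}v>0$, for each $(x',\lambda)$ and each value $\tau\in(\underline v(x',\lambda),\overline v(x',\lambda))$ there is a unique $x_n$ with $v(x',x_n,\lambda)=\tau$, and one sets $\Phi(x',\lambda,\tau)$ to be that $x_n$. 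Then the vector field
\[
X(x,\lambda,\tau)=\nabla_{(x,\lambda)}v\big(x',\Phi(x',\lambda,\tau),\lambda\big)
\]
is divergence-free in $(x,\lambda)$ for each fixed $\tau$ (it is the gradient of a harmonic function), and this is exactly the structure needed to build a calibration for the functional $\mathcal E_{C_R}$.

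The key computation, which I would carry out next, is the following pointwise inequality: for any $(x,\lambda)$ and any value $w=w(x,\lambda)\in[\underline v(x',\lambda),\overline v(x',\lambda)]$,
\[
\tfrac12|\nabla w|^2 + G(w) \;\ge\; X\big(x,\lambda,w\big)\cdot\nabla w \;-\; \tfrac12\big|X(x,\lambda,w)\big|^2 + G(w),
\]
by Cauchy--Schwarz, with equality when $\nabla w = X(x,\lambda,w)$, which is the case for $w=v$. The right-hand side, viewed as a function of $w$ and $\nabla w$, has the form of a null Lagrangian once one checks the Euler--Lagrange compatibility: writing $\mathcal L(x,\lambda,w,\nabla w)=X(x,\lambda,w)\cdot\nabla w-\tfrac12|X(x,\lambda,w)|^2+G(w)$, the integral $\int_{C_R}\mathcal L\,dx\,d\lambda+\int_{B_R}(\text{boundary term})$ depends only on the boundary values of $w$ on $\partial^+C_R$, provided the divergence-free property of $X(\cdot,\cdot,\tau)$ holds and the Neumann condition $-\partial_\lambda v=f(v)=-G'(v)$ is used on $B_R\times\{0\}$ to absorb the potential term. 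Tracking the boundary contributions: on $\partial^+C_R$, $w=v$ so these terms are identical for $v$ and for $w$; on the bottom $B_R\times\{0\}$, the $\lambda$-component of $X$ evaluated along $\tau=w$ produces, after integrating, a term $\int_{B_R}[G(w(x,0))-\text{const}]\,dx$ that matches exactly because $-\partial_\lambda v(x',\Phi,0)=f(\Phi\text{-value})$; and the top and lateral integrals telescope by the divergence theorem into the prescribed boundary data.

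Assembling these, one gets
\[
\mathcal E_{C_R}(v)=\int_{C_R}\mathcal L(\cdot,v,\nabla v)\,dx\,d\lambda+\text{(bdry)}=\int_{C_R}\mathcal L(\cdot,w,\nabla w)\,dx\,d\lambda+\text{(same bdry)}\le \mathcal E_{C_R}(w),
\]
the middle equality because $\mathcal L$ integrates to something depending only on boundary data of the argument and $v,w$ share that data, and the last inequality by the pointwise Cauchy--Schwarz bound above. I expect the main obstacle to be regularity and integrability of the calibration field $X$ near the ``edges'' of the trapping region, that is, where $w(x,\lambda)$ approaches $\underline v$ or $\overline v$: there $\Phi(x',\lambda,\tau)\to\pm\infty$, so one must verify that $\nabla_{(x,\lambda)}v$ stays controlled (this follows from the gradient bound \eqref{grad_CSM}: $|\nabla v|\le C/(1+\lambda)$, which is uniform in $x_n$) and that the various boundary integrals converge and the integration by parts is legitimate on the Lipschitz cylinder $C_R$ — in particular handling the corner $\partial B_R\times\{0\}$ and the fact that $w$ is only assumed $C^1$. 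A secondary technical point is to make sense of, and justify differentiating under, the implicit definition of $\Phi$; one circumvents this by working directly with the family $\{v^t\}$ and a density/approximation argument (approximating $w$ by functions strictly between $\underline v$ and $\overline v$ and cutting off near the slab boundary), rather than with $\Phi$ explicitly. Modulo these standard but somewhat delicate localization steps, the calibration argument gives the stated minimality.
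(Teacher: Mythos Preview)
Your calibration route is genuinely different from the paper's. The paper argues in two soft steps: first, an absolute minimizer of $\mathcal E_{C_R}$ exists in the closed convex set $C_v=\{w\in H^1(C_R): w=v\text{ on }\partial^+C_R,\ \underline v\le w\le\overline v\}$ by the direct method (using that $\underline v,\overline v$ are strict sub/supersolutions to truncate); second, any solution of the mixed problem
\[
\Delta w=0\ \text{in }C_R,\qquad w=v\ \text{on }\partial^+C_R,\qquad -\partial_\lambda w=f(w)\ \text{on }B_R\times\{0\},\qquad \underline v\le w\le\overline v,
\]
must equal $v$, by sliding $v^t$ in $x_n$ and invoking the strong maximum principle and Hopf lemma. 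Since a minimizer in $C_v$ solves this problem, it is $v$. No calibration field is built; only comparison principles are used.

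Your approach can be made to work, but the justification you give for the null-Lagrangian step is wrong. The field $X(\cdot,\cdot,\tau)=(\nabla v)(x',\Phi(x',\lambda,\tau),\lambda)$ is \emph{not} divergence-free in $(x,\lambda)$ at fixed $\tau$, and it is not ``the gradient of a harmonic function'': as $(x,\lambda)$ vary with $\tau$ fixed, the evaluation point $(x',\Phi,\lambda)$ slides along the level set $\{v=\tau\}$, so $X$ is not a gradient field at all. What actually holds (and what you need) is the identity
\[
\mathrm{div}_{(x,\lambda)}X(\cdot,\cdot,\tau)\;=\;-\tfrac12\,\partial_\tau\bigl|X(\cdot,\cdot,\tau)\bigr|^2,
\]
obtained from $\Delta v=0$ together with $\partial_\tau\Phi=1/\partial_{x_n}v$; this is the correct closedness condition making $X\cdot\nabla w-\tfrac12|X|^2$ a null Lagrangian. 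Note also that $G(w)$ does not belong in the volume density $\mathcal L$: it is purely a boundary term on $B_R\times\{0\}$, where it is exactly cancelled by the flux of $X_\lambda$, since $X_\lambda(x,0,s)=\partial_\lambda v=-f(s)$ and hence $\int_{v}^{w}X_\lambda(x,0,s)\,ds=G(w)-G(v)$. With these corrections the calibration argument yields the proposition, subject to the regularity bookkeeping you already flag near $\{w=\underline v\}\cup\{w=\overline v\}$. The paper's sliding-plus-uniqueness proof is shorter and sidesteps all of this.
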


\begin{proof}
This property of minimality of monotone solutions among functions $w$ such that
$\underline{v}\leq w\leq \overline{v}$ follows from the
following two results:

\noindent
i) Uniqueness of solution to the problem
\begin{eqnarray}\label{eq2-ball}
\begin{cases}
\Delta w=0& \text{in}\; C_R,\\
w=v& \text{on}\; \partial^+C_R=\partial C_R\times\{\lambda>0\},\\
- \partial_{ \lambda}w=f(w)& \text{on}\; \partial^0 C_R=B_R\times\{\lambda =0\},\\
\underline{v}\leq w\leq \overline{v}&\text{in}\; C_R.
\end{cases}
\end{eqnarray}
Thus, the solution must be $w\equiv v$.
This is the analog of Lemma 3.1 of \cite{C-SM}, and below we comment on its proof.

\noindent ii) Existence of an absolute minimizer for
$\mathcal{E}_{C_R}$ in the set
$$C_v=\{w\in H^1(C_R)\ :\ w \equiv
v\:\mbox{on}\:\partial^+C_R,\:\underline{v}\leq w\leq
\overline{v}\:\mbox{in}\:\:C_R\}.$$
This is the analog of Lemma 2.10 of
\cite{C-SM}.

The statement of the proposition follows from the fact that by i) and ii), the monotone solution $v$, by uniqueness, must agree with
the absolute minimizer in $C_R$.

To prove points i) and ii), we proceed exactly as in \cite{C-SM},
with the difference that here we do not assume $\lim_{x_n\rightarrow \pm \infty}v=\pm 1$.
We have only to substitute $-1$ and
$+1$ by $\underline{v}$ and $\overline{v}$, respectively, in the
proofs of Lemma 3.1 and Lemma 2.10 in \cite{C-SM}. For this, it is important that $\underline v$ and $\overline v$ are, respectively, a strict subsolution and a strict supersolution of the Dirichlet--Neumann mixed problem \eqref{eq2-ball}.
We make a short
comment about these proofs.

The proof of uniqueness is based, as in Lemma 3.1 of \cite{C-SM}, on sliding the
function $v(x,\lambda)$ in the direction $x_n$. We set
$$v^t(x_1,...,x_n,\lambda)=v(x_1,...,x_n+t,\lambda)\quad \mbox{for every}\:(x,\lambda)\in \overline{C}_R.$$
Since $v^t\rightarrow \overline{v}$ as $t\rightarrow +\infty$
uniformly in $\overline{C}_R$ and $\underline{v}< w<
\overline{v}$, then $w<v^t$ in $\overline{C}_R$, for $t$ large
enough. We want to prove that $w<v^t$ in $\overline{C}_R$ for
every $t>0$. Suppose that $s>0$ is the infimum of those $t>0$ such
that $w<v^t$ in $\overline{C}_R$. Then by
applying the maximum principle and Hopf's lemma we get a contradiction, since one would have $w\leq v^s$ in $\overline C_R$ and $w=v^s$ at some point in $\overline C_R \setminus \partial^+ C_R$.

To prove the existence of an absolute minimizer for
$\mathcal{E}_{C_R}$ in the convex set $C_v$, we proceed exactly as
in the proof of Lemma 2.10 of \cite{C-SM}, substituting $-1$ and
$+1$ by the subsolutions and supersolution $\underline{v}$ and $\overline{v}$, respectively.
\end{proof}

We give now the proof of the energy estimate in dimension 3 for
monotone solutions without the limit assumptions.

\begin{proof}[Proof of Theorem \ref{energy-dim3}]
We follow the proof of Theorem 5.2 of \cite{AAC}. We need to prove that the comparison function $\overline w$, used in the proof of Theorem \ref{energy-est}, satisfies $\underline v\leq \overline w\leq \overline v$. Then we can apply Proposition
\ref{monot-min} to make the comparison argument with the function $\overline w$ (as for global minimizers).
We recall that $\overline w$ is the solution of problem \eqref{w},
\begin{equation}\label{pb-w}\begin{cases}
\Delta \overline{w}=0 & \mbox{in} \;C_R\\
\overline{w}(x,0)=g(x)& \mbox{on}\;B_R\times \{\lambda=0\}\\
\overline{w}(x,\lambda)=v(x,\lambda)& \mbox{on}\;\partial
C_R\cap \{\lambda >0\},\end{cases}\end{equation}
where $g=s\eta_R+(1-\eta_R)v(\cdot,0)$. Thus, if we prove that $\sup \underline v\leq s \leq \inf \overline v$, then $\underline v\leq g\leq \overline v$ and hence $\underline v$ and $\overline v$ are respectively, subsolution and supersolutions of \eqref{pb-w}. It follows that $\underline v\leq \overline w\leq \overline v$, as desired. 

To show that $\sup \underline v\leq s \leq \inf \overline v$, let
$m=\inf u=\inf \underline{u}$ and $M=\sup u=\sup \overline{u}$,
where $\underline{u}$ and $\overline{u}$ are defined in Lemma
\ref{lemma}. Set $\widetilde{m}=\sup \underline{u}$ and
$\widetilde{M}=\inf \overline{u}$, obviously $\widetilde{m}$ and
$\widetilde{M}$ belong to $[m,M]$. By Lemma \ref{lemma},
$\underline{u}$ and $\overline{u}$ are either constant or monotone
1-D solutions; moreover,
\begin{equation}\label{1}
G>G(m)=G(\widetilde{m})\;\;\mbox{ in}\;
(m,\widetilde{m})\end{equation} in case $m<\widetilde{m}$ (i.e.
$\underline u$ not constant), and
\begin{equation}\label{2}G>G(M)=G(\widetilde{M})\;\;\mbox{ in}\;(\widetilde{M},M)\end{equation}
 in case $\widetilde{M}<M$ (i.e.
$\overline u$ not constant).

In all four possible cases (that is, each
$\underline{u}$ and $\overline{u}$ is constant or
one-dimensional), we deduce from (\ref{1}) and (\ref{2}) that
$\widetilde{m}\leq \widetilde{M}$ and that there exists $s\in
[\widetilde{m},\widetilde{M}]$ such that $G(s)=c_u$ (recall that
$c_u$ is the infimum of $G$ in the range of $u$). We conclude that
$$\sup \underline{u}=\sup\underline{v}\leq \widetilde{m}\leq s\leq
\widetilde{M}\leq\inf\overline{v}=\inf\overline{u}.$$ Hence, we can apply Proposition
\ref{monot-min} to make comparison argument with the function $\overline w$ and obtain the desired energy estimate.
\end{proof}

\section{1-D symmetry in $\re^3$}

In this section we present the Liouville result due to Moschini \cite{mosch} that we will use in
the proof of 1-D symmetry in dimension $n=3$. Set
$$\mathcal{F}=\left\{F:\re^+\rightarrow \re^+, F\;\;\mbox{is
nondecreasing
and}\;\;\int_2^{+\infty}\frac{1}{rF(r)}=+\infty\right\}.$$ 
Note that $\mathcal{F}$ includes the function $F(r)=\log r$.

\begin{proposition}[\cite{mosch}]\label{moschini}
Let $\varphi\in L_{{\rm loc}}^{\infty}(\re^{n+1}_+)$ be a positive
function. Suppose that $\sigma \in H^1_{{\rm loc}}(\re^{n+1}_+)$
satisfies \begin{equation}\label{hp-mosch}\begin{cases} -\sigma
\rm{div}(\varphi^2\nabla \sigma)\leq
0\;&\mbox{in}\;\;\re^{n+1}_+\\
-\sigma\partial_\lambda \sigma \leq 0
\;&\mbox{on}\;\;\partial\re^{n+1}_+\end{cases}\end{equation}
 in the weak sense.
 Let the following condition hold:
\begin{equation}\label{hp-mosch1}\limsup_{R\rightarrow
+\infty}\frac{1}{R^2F(R)}\int_{C_R}(\varphi
\sigma)^2dx<\infty\end{equation} for some $F\in
\mathcal{F}$.

Then, $\sigma$ is constant.

In particular, this statement holds with $F(R)=\log R$.
\end{proposition}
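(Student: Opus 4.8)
The plan is to run the classical Liouville-type argument of Berestycki--Caffarelli--Nirenberg, adapted to the half-space with a Neumann-type boundary term, using a logarithmic cutoff. First I would fix $R>2$ and choose a test function $\zeta=\zeta_R\in C_c^\infty(\re^{n+1})$ (or Lipschitz with compact support) depending only on $|(x,\lambda)|$, with $\zeta\equiv 1$ on $C_R$, $\zeta\equiv 0$ outside $C_{R^2}$ (roughly), and with $|\nabla\zeta|$ controlled so that the Dirichlet integral of $\zeta$ against the weight $(\varphi\sigma)^2$ picks up only a factor $1/(\rho^2)$ on the annulus $\rho\sim$ between $R$ and $R^2$; the precise choice is the standard one that produces $\int |\nabla\zeta|^2(\varphi\sigma)^2\lesssim \int_{R}^{R^2}\frac{dr}{rF(r)}\cdot\sup$, exploiting $F\in\mathcal F$. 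Testing the weak inequality \eqref{hp-mosch} against $\zeta^2$ and integrating by parts over $\re^{n+1}_+$, the boundary term on $\partial\re^{n+1}_+$ has the right sign by the second line of \eqref{hp-mosch} (it contributes $-\int \zeta^2\,\sigma\partial_\lambda\sigma\ge 0$ on the correct side), so it can be discarded.

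The key algebraic step is the Caccioppoli-type inequality: from $-\sigma\,\mathrm{div}(\varphi^2\nabla\sigma)\le 0$ tested with $\zeta^2$ one obtains, after the product rule and Young's inequality,
\begin{equation*}
\int_{\re^{n+1}_+}\varphi^2\zeta^2|\nabla\sigma|^2\,dx\,d\lambda \;\le\; C\int_{\re^{n+1}_+}|\nabla\zeta|^2\,\varphi^2\sigma^2\,dx\,d\lambda.
\end{equation*}
Then I would plug in the logarithmic cutoff and use hypothesis \eqref{hp-mosch1}: writing the right-hand side as a dyadic or continuous sum over annuli $C_{2^{k+1}}\setminus C_{2^k}$ (or directly as $\int_R^{R^2}\frac{dr}{rF(r)}$ after normalizing $\zeta$ by $\int_2^{R}\frac{ds}{sF(s)}$ in the usual De Giorgi--BCN manner), the bound $\int_{C_\rho}(\varphi\sigma)^2\le C\rho^2 F(\rho)$ makes each annular contribution $O\big(1/\log\text{-factor}\big)$, and the divergence of $\int_2^\infty \frac{dr}{rF(r)}$ forces the right-hand side to tend to $0$ as $R\to\infty$. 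Hence $\varphi\nabla\sigma\equiv 0$ a.e., and since $\varphi>0$ this gives $\nabla\sigma\equiv 0$, i.e.\ $\sigma$ is constant. The special case $F(R)=\log R$ is immediate since $\log\in\mathcal F$.

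The main obstacle is bookkeeping the cutoff so that the annular sum genuinely telescopes against $1/(rF(r))$: one must normalize $\zeta_R$ correctly (the right choice is $\zeta_R$ built from the primitive of $1/(sF(s))$, so that $|\nabla\zeta_R|^2\lesssim \big(\int_2^{R}\frac{ds}{sF(s)}\big)^{-2}\frac{1}{r^2F(r)^2}$ on the transition region), and then combine with \eqref{hp-mosch1} to see the estimate degenerates to $O\big(1/\int_2^R\frac{ds}{sF(s)}\big)\to 0$. A secondary technical point is justifying the integration by parts and the use of $\zeta^2$ as a test function purely at the $H^1_{\mathrm{loc}}$ level with the locally bounded weight $\varphi$, together with a density/truncation argument so that the weak formulation \eqref{hp-mosch} genuinely applies to $\zeta^2$ — but since $\zeta$ has compact support in $\overline{\re^{n+1}_+}$ and $\varphi\in L^\infty_{\mathrm{loc}}$, this is routine.
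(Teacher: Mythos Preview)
Your approach is correct and reaches the conclusion, but it is a genuinely different route from the paper's. You run the Berestycki--Caffarelli--Nirenberg cutoff argument: test \eqref{hp-mosch} against $\zeta^2$, derive the Caccioppoli inequality $\int \varphi^2\zeta^2|\nabla\sigma|^2 \le C\int(\varphi\sigma)^2|\nabla\zeta|^2$, and force the right-hand side to $0$ by building $\zeta_R$ from the primitive of $1/(sF(s))$. The paper instead follows Moschini's ODE/monotonicity argument: setting $D(R)=\int_{C_R}\varphi^2|\nabla\sigma|^2$, one integrates $\mathrm{div}(\sigma\varphi^2\nabla\sigma)\ge\varphi^2|\nabla\sigma|^2$ over $C_R$, applies Cauchy--Schwarz on $\partial^+C_R$ to get $D(R)^2\le D'(R)\int_{\partial^+C_R}(\varphi\sigma)^2$, rewrites this as a lower bound for $(-1/D)'$, and then telescopes over dyadic annuli $[2^j r_*,2^{j+1}r_*]$; hypothesis \eqref{hp-mosch1} converts each term into $\ge c/F(2^{j+1}r_*)$, and the divergent series forces $1/D(r_*)=+\infty$. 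The paper's argument is slightly cleaner in that the telescoping $\sum_j\big(\tfrac{1}{D(r_j)}-\tfrac{1}{D(r_{j+1})}\big)$ arises for free and yields the sum $\sum_j 1/F(2^{j+1}r_*)$ directly, with no regularity or doubling assumption on $F$. In your approach the step you flag as ``the main obstacle'' is real: with the continuous cutoff $|\nabla\zeta_R|^2\lesssim I(R)^{-2}(rF(r))^{-2}$, the annular estimate $\int_{C_\rho}(\varphi\sigma)^2\le C\rho^2F(\rho)$ naively produces terms $F(2^{j+1})/F(2^j)^2$, and to collapse these to $\sum_j 1/F(2^j)$ one must either integrate by parts in the radial variable (using that $F$ is nondecreasing, so the Stieltjes term has a sign) or switch to a piecewise-linear cutoff with slopes $\alpha_j\propto 1/F(2^{j+1})$ chosen by optimization. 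Either works, but this is exactly the bookkeeping that the ODE argument circumvents.
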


\begin{remark}
In \cite{mosch}, the author proves the previous result under the
assumption \begin{equation}\label{hp-mosch2}\sum_{j=0}^{+\infty}\frac{1}{F(2^{j+1})}=+\infty\end{equation} on $F$. This is
equivalent to $\int_2^{+\infty}(rF(r))^{-1}dr=+\infty.$ Indeed, since the function 
$j \mapsto F(2^{j+1})$ is nondecreasing, we have that
$$\sum_{j=3}^{+\infty}\frac{1}{F(2^{j+1})}\leq
\int_2^{+\infty}\frac{ds}{F(2^{s+1})}=\frac{1}{\log 2}\int_{8}^{\infty}\frac{dr}{rF(r)}
\leq 
\sum_{j=2}^{+\infty}\frac{1}{F(2^{j+1})}.$$ Thus,
\eqref{hp-mosch2} holds if and only if $F \in \mathcal {F}$.
\end{remark}

\begin{proof}[Proof of Proposition \ref{moschini}]
We present the proof following that of Theorem 5.1 of
\cite{mosch}, here in $C_R$ instead of $B_R$. Set $\partial^+ C_R:=\partial C_R \cap \{\lambda>0\}.$ Since
$\sigma$ satisfies (\ref{hp-mosch}), we have
\begin{equation}\label{mosch1}
\rm{div}(\sigma\varphi^2\nabla \sigma)\geq \varphi^2|\nabla
\sigma|^2.
\end{equation}
On the other hand
\begin{equation}\label{mosch2}
\int_{\partial^+C_R}\sigma \varphi^2 \frac{\partial
\sigma}{\partial \nu}ds \leq
\left(\int_{\partial^+C_R}\varphi^2|\nabla\sigma|^2
ds\right)^{\frac{1}{2}}\left(\int_{\partial^+C_R}(\varphi\sigma)^2
ds\right)^{\frac{1}{2}},\end{equation} where $\nu$ denotes the
outer normal vector on $\partial^+C_R$. Now, set, as
in \cite{mosch},
$$D(R)=\int_{C_R}\varphi^2 |\nabla \sigma|^2dx.$$
Integrating (\ref{mosch1}) over $C_R$, using that
$-\sigma\partial_\lambda \sigma \leq 0$ on the bottom boundary $\partial^0C_R=\partial
C_R \cap \{\lambda=0\}$, and using (\ref{mosch2}), we get
\begin{equation}\label{D(R)}D(R)\leq D'(R)^{\frac{1}{2}}\left(\int_{\partial^+
C_R}(\varphi \sigma)^2
ds\right)^{\frac{1}{2}},\end{equation} which is the analog of
(5.5) in \cite{mosch} on $\partial^+ C_R$ instead of
$\partial B_R$. 

Assume that $\sigma$ is not constant. Then,
there exists $R_0>0$ such that $D(R)>0$ for every $R>R_0$.
Integrating \eqref{D(R)} and using Schwarz inequality, we get
that, for every $r_2>r_1>R_0$,
\begin{eqnarray}\label{r_j}
&&(r_2-r_1)^2 \left(\int_{C_{r_2}\setminus
C_{r_1}}(\varphi \sigma)^2dx\right)^{-1}=
(r_2-r_1)^2 \left(\int_{r_1}^{r_2}dR\int_{\partial^+C_R}ds\ (\varphi \sigma)^2\right)^{-1}\nonumber \\
&&\hspace{2em}\leq \int_{r_1}^{r_2}dR \left(\int_{\partial^+C_R}ds\ (\varphi \sigma)^2\right)^{-1}\leq
\frac{1}{D(r_1)}-\frac{1}{D(r_2)}.\end{eqnarray} 
Next, choose
$r_2=2^{j+1}r_*$ and $r_1=2^{j}r_*$, for some $r_*>R_0$, for every
$j=0,...,N-1$. Using \eqref{hp-mosch1}, \eqref{r_j} and summing over $j$, we find
that
\begin{equation}\label{serie}
\frac{1}{D(r_*)}\geq
C\sum_{j=0}^{N-1}\frac{1}{F(2^{j+1}r_*)}.\end{equation} If $j_0$
is such that $r_*\leq 2^{j_0}$, then, by hypothesis on $F$,
$F(2^{j+1}r_*)\leq F(2^{j+j_0+1})$. Thus, by \eqref{hp-mosch2}, the sum in \eqref{serie}
diverges as $N\rightarrow \infty$ and hence $D(r_*)=0$ for every
$r_*>R_0$, which is a contradiction.
\end{proof}
We can give now the proof of the 1-D symmetry result.
\begin{proof}[Proof of Theorem \ref{degiorgi}]  Without loss of generality we can suppose $e=(0,0,1)$.
We follow the proof of Lemma 4.2 in
\cite{C-SM}.

First of all observe that both global minimizers and
monotone solutions are stable. Then, in both cases, by Lemma 4.1
in \cite{C-SM}, there exists a function $\varphi \in
C^1_{{\rm loc}}(\overline{\re^{4}_+})\cap C^2(\re^{4}_+)$ such that
$\varphi >0$ in $\overline{\re^{4}_+}$ and
\begin{equation*}\begin{cases}
\Delta \varphi=0 & \mbox{in} \;\re_+^{4}\\
 -\frac{\partial \varphi}{\partial \lambda}=f'(v)\varphi & \mbox{on}\;\partial
 \re^{4}_+.\end{cases}\end{equation*}
Note that, if $u$ is a monotone solution in the direction $x_3$,
then we can choose $\varphi=v_{x_3}$, where $v$ is the harmonic extension of
$u$ in the half space. For $i=1,2,3$ fixed, consider the function
$$\sigma_i=\frac{v_{x_i}}{\varphi}.$$ We prove that $\sigma_i$ is
constant in $\re^{4}_+$, using the Liouville result of Proposition \ref{moschini} and our energy estimate.

Since
$$\varphi^2\nabla \sigma_i=\varphi\nabla v_{x_i}-v_{x_i}\nabla
\varphi,$$ we have that
$$\mbox{div}(\varphi^2\nabla \sigma_i)=0\quad
\mbox{in}\;\;\re^{4}_+.$$ Moreover, the normal derivative
$-\partial_\lambda \sigma_i$ is zero on $\partial
\re^{4}_+$. Indeed,
$$\varphi^2\partial_\lambda\sigma_i=\varphi v_{\lambda
x_i}-v_{x_i}\varphi_\lambda=0$$ since both $v_{x_i}$ and $\varphi$
satisfy the same boundary condition
$$-\partial_{\lambda} v_{x_i}-f'(v)v_{x_i}=0,\;\;\;
-\partial_{\lambda} \varphi-f'(v)\varphi=0.$$
Now, using our energy
estimates (\ref{energy}) or \eqref{en3}, we have for $n=3$,
$$\int_{C_R}(\varphi\sigma_i)^2\leq \int_{C_R}|\nabla v|^2 \leq CR^2
\log R,\quad \mbox{for every} \;R>2.$$ Thus, using Proposition
\ref{moschini}, we deduce that $\sigma_i$ is constant for every
$i=1,2,3$, i.e.,
$$v_{x_i}=c_i \varphi \quad \mbox{for some constant}\;\;
c_i,\;\;\;\mbox{with}\:\:i=1,2,3.$$

We conclude the proof
observing that if $c_1=c_2=c_3=0$ then $v$ is constant. Otherwise
we have
$$
c_iv_{x_j}-c_jv_{x_i}=0\quad \mbox{for every}\:i\neq j,$$ and we
deduce that $v$ depends only on $\lambda$ and on the variable
parallel to the
vector $(c_1,c_2,c_3)$. Thus, $u(x)=v(x,0)$ is 1-D.
\end{proof}

\section{Energy estimate for saddle-shaped solutions}

In this section we prove that the energy estimate \eqref{energy} holds
also for some saddle solutions (which are known \cite{Cinti} not to be global minimizers in
dimensions $2m\leq 6$) of the problem
$$(-\Delta)^{1/2}u=f(u)\quad \mbox{in}\;\;\re^{2m}.$$
Here, we suppose that $f$ is balanced and bistable, that is $f$ satisfies
hypotheses \eqref{h11}, \eqref{h22}, and \eqref{h33}.

We recall that saddle solutions are even with respect to the
coordinate axes and odd with respect to the Simons cone, which is
defined as follows:
$$\mathcal{C}=\{x\in
\re^{2m}:x_1^2+...+x_m^2=x_{m+1}^2+...+x_{2m}^2\}.$$

If we set $$s=\sqrt{x_1^2+\dots + x_m^2} \quad \mbox{ and }\quad
t=\sqrt{x_{m+1}^2+\dots + x_{2m}^2},$$ then the Simons cone
becomes ${\mathcal C}=\{s=t\}$. We say that
a solution $u$ of problem \eqref{eq1} is a saddle solution if it satisfies the following 
properties:

\noindent a) 
 $u$ depends only on the variables $s$ and $t$. We write
$u=u(s,t)$; 

\noindent b)
 $u>0$ for $s>t$; 

\noindent c)
$u(s,t)=-u(t,s).$

In \cite{Cinti}, the second author proves the existence of a
saddle solution $u=u(x)$ to problem (\ref{eq1}), by proving the
existence of a solution $v=v(x,\lambda)$ to problem (\ref{eq2})
with the following properties:

\noindent
 a) $v$ depends only on the variables $s,\: t$ and
$\lambda$. We write $v=v(s,t,\lambda)$; 

\noindent b) $v>0$ for
$s>t$;

\noindent c) $v(s,t,\lambda)=-v(t,s,\lambda)$.

The proof of the existence of such function $v$ is simple and it
uses a non-sharp energy estimate. Next, we sketch the proof.

We use the following notations:
$$\mathcal{O}:=\{x\in \re^{2m}:s>t\}\subset \re^{2m},$$
$$\widetilde{\mathcal{O}}:=\{(x,\lambda)\in \re^{2m+1}_+:x\in
\mathcal O\}\subset \re^{2m+1}_+.$$

Note that
$$
\partial{\mathcal O}={\mathcal C}.
$$

Let $B_R$ be the open ball in $\re^{2m}$ centered at the origin
and of radius $R$. We will consider the open bounded
sets
$$
{\mathcal O}_R:={\mathcal O}\cap B_R=\{s>t,
 |x|^2=s^2+t^2<R^2\}\subset \re^{2m},$$
$$\quad \widetilde{\mathcal O}_R:={\mathcal O}_R \times (0,R),\quad \mbox{and}\quad
\widetilde{\mathcal{O}}_{R,L}:={\mathcal O}_R \times (0,L).$$ Note that
$$
\partial {\mathcal O}_R=({\mathcal C}\cap \overline{B}_R)\cup
(\partial{B_R}\cap {\mathcal O}).
$$
Moreover we define the set
$$
\widetilde{H}_{0}^1(\widetilde{{\mathcal O}}_{R,L})=\{v\in
H^1(\widetilde{{\mathcal O}}_{R,L}) : v\equiv 0
\:\:\mbox{on}\:\:\partial^+\widetilde{{\mathcal O}}_{R,L},\:
v=v(s,t,\lambda) \text{ a.e.}\}.
$$

\begin{proof}[Proof of Theorem \ref{saddle}]
The proof of existence of the saddle solution $v$ in
$\re^{2m+1}_+$ can
be resumed in three steps.

{\em Step a}).   For every
$R>0$, $L>0$ consider the minimizer $v_{R,L}$ of the energy functional

$$
{\mathcal E}_{\widetilde{{\mathcal
O}}_{R,L}}(v)=\int_{\widetilde{{\mathcal O}}_{R,L}} \frac{1}{2}|\nabla
v|^2+ \int_{\mathcal O_{R}}G(v) $$
among all functions belonging to the space
$\widetilde{H}_{0}^1(\widetilde{{\mathcal O}}_{R,L})$. The existence of such minimizer, that may be taken to satisfy $|v_{R,L}|\leq 1$ by hypothesis \eqref{h22}, follows by lower semicontinuity of the energy functional. The minimizer $v_{R,L}$ is a solution of the equation \eqref{eq2} written in the $(s,t,\lambda)$ variables and we can assume that $v_{R,L}\geq 0$ in ${\widetilde{{\mathcal O}}_{R,L}}$.

{\em Step b}).   Extend $v_{R,L}$ to $B_R \times (0,L)$ by odd
reflection with respect to $\mathcal C\times (0,L)$, that is, $v_{R,L}(s,t,\lambda)=-v_{R,L}(t,s,\lambda)$. Then, $v_{R,L}$ is a solution in $B_R \times (0,L)$. 

{\em Step c}).   Define $v$ as the limit of the sequence
$v_{R,L}$ as $R\rightarrow +\infty$, taking $L=R^{\gamma}\rightarrow +\infty$ with $1/2\leq\gamma<1$.  With the aid of a non-sharp energy estimate, verify that $v \not \equiv 0$ and, as a consequence, that $v$ is a saddle solution. This step could also be carried out using the sharp energy estimate that we prove next.

Here, it is important to observe that the solution $v$ constructed
in this way is not a global minimizer in $\re^{2m+1}_+$ (indeed it
is not stable in dimensions $2m=4,6$ by a result of \cite{Cinti}),
but it is a minimizer in $\widetilde{\mathcal O}$, or in other
words, it is a minimizer under perturbations vanishing on the
Simons cone. Next, we use this fact to prove the energy estimate ${\mathcal E}_{\widetilde{{\mathcal
O}}_{R}}(v)\leq CR^{2m-1}\log R$ in the set $\widetilde{\mathcal O}_R=\mathcal O_R\times (0,R)$, using a comparison
argument as for global minimizers.

As before, we want to construct a comparison function $\overline
w$ in $\widetilde{\mathcal O}_R$ which agrees with $v$ on
$\partial^+\widetilde{\mathcal O}_R$ and such that
\begin{equation}\label{en-w}{\mathcal E}_{\widetilde{{\mathcal
O}}_{R}}(\overline w)=\int_{\widetilde{{\mathcal O}}_{R}}
\frac{1}{2}|\nabla \overline w|^2+ \int_{{\mathcal O}_{R}}G(\overline w)\leq CR^{2m-1}\log
R.\end{equation}
We define the function $\overline w=\overline w(x,\lambda)=\overline w(s,t,\lambda)$ in $\widetilde{{\mathcal O}}_{R}$ in the following way.

First we define $\overline{w}(x,0)$
on the base ${\mathcal O}_{R} $ of $\widetilde{{\mathcal O}}_{R} $ to be equal to a smooth function $g(x)$ which is
identically equal to $1$ in ${\mathcal O}_{R-1}\cap \{(s-t)/\sqrt 2>1\}$ and $g(x)=v(x,0)$ on
$\partial {\mathcal O}_{R} $. The function $g$ is defined as follows:
\begin{equation}\label{g-saddle}
 g=\eta_R\min \left\{1,\frac{s-t}{\sqrt 2}\right\}+(1-\eta_R)v(\cdot,0),
\end{equation}
where $\eta_R$ is a smooth function depending only on $r=|x|=(s^2+t^2)^{1/2}$ such that $\eta_R\equiv 1$ in $\mathcal O_{R-1}$ and $\eta_R\equiv 0$ outside $\mathcal O_R$.
Let $\overline{w}=\overline{w}(x,\lambda)=\overline{w}(s,t,\lambda)$ be any Lipschitz function in the closure of $\widetilde{\mathcal O}_R$ (the precise function $\overline{w}$ 
will be chosen later) such that
\begin{equation}\label{w-saddle}\begin{cases}
\overline{w}(x,0)=g(x)& \mbox{on}\;{\mathcal O}_R\times \{\lambda=0\}\\
 \overline{w}(x,\lambda)=v(x,\lambda)& \mbox{on}\;\partial
 \widetilde{{\mathcal O}}_{R}\cap \{\lambda >0\}.\end{cases}\end{equation}

Since $v$ is a global minimizer of $\mathcal{E}_{\widetilde{{\mathcal O}}_{R}}$ and
$\overline{w}=v$ on $\partial \widetilde{{\mathcal O}}_{R}\cap \{\lambda >0\}$, then
\begin{eqnarray}&&\int_{\widetilde{{\mathcal O}}_{R}}\frac{1}{2}|\nabla v|^2dx d\lambda
+\int_{\mathcal O_R}G(u)dx \nonumber \\
&& \hspace{2em}\leq \int_{\widetilde{{\mathcal O}}_{R}}\frac{1}{2}|\nabla \overline{w}|^2dx
d\lambda +\int_{\mathcal O_R}G(\overline{w}(x,0))dx.\nonumber
\end{eqnarray}
We establish now the bound \eqref{en-w} for the energy $\mathcal{E}_{\widetilde{{\mathcal O}}_{R}}(\overline w)$ of $\overline w$.

Observe that the potential energy of $\overline w$ is bounded by $CR^{2m-1}$, indeed
\begin{eqnarray*}
\int_{\mathcal O_R}G(\overline{w}(x,0))dx&\leq & C\left|\mathcal O_{R-1}\cap \left\{\frac{s-t}{\sqrt 2}<1\right\}\right|+ C\left|\mathcal O_{R}\setminus \mathcal O_{R-1}\right|\\
&\leq & C\int_0^{R-1}\{(t+\sqrt 2)^{m}-t^m\}t^{m-1}dt + C R^{2m-1}\leq CR^{2m-1}.
\end{eqnarray*}

Next, we bound the Dirichlet energy of $\overline w$. 
First of all, as in the proof of the energy estimate for global minimizers, we rescale and
set
$$w_1(x,\lambda)=\overline w(Rx,R\lambda)\quad \mbox{for
every}\;\;(x,\lambda)\in \widetilde{\mathcal O}_{1}.$$

Thus, the Dirichlet energy of $\overline w$ in $\widetilde{\mathcal O}_{R}$, satisfies \begin{eqnarray*}\int_{\widetilde{{\mathcal O}}_{R}}
\frac{1}{2}|\nabla \overline w|^2
= CR^{2m-1}\int_{\widetilde{{\mathcal O}}_{1}}
\frac{1}{2}|\nabla w_1|^2.\end{eqnarray*}
Setting
$\varepsilon=1/R$, we need to prove that
\begin{equation}\label{rescaled}
\int_{\widetilde{{\mathcal O}}_{1}} \frac{1}{2}|\nabla w_1|^2\leq
C|\log\varepsilon|.\end{equation}
Set $s=|(x_1,...,x_m)|$ and $t=|(x_{m+1},...,x_{2m})|$, for every $x=(x_1,...,x_{2m})\in \mathcal O_1$.
We observe that
\begin{eqnarray*}
&&\int_{\widetilde{{\mathcal O}}_{1}} \frac{1}{2}|\nabla w_1|^2dx d\lambda =\\
&&\hspace{2em}=C\int_0^1d\lambda \int_{\{s^2+t^2<1, s>t\geq 0\}}\left\{(\partial_{s}w_1)^2+ (\partial_{t}w_1)^2+(\partial_{\lambda }w_1)^2\right\} s^{m-1} t^{m-1} ds dt \\
&& \hspace{2em}\leq C\int_0^1d\lambda \int_{\{s^2+t^2<1, s>t\geq 0\}}\left\{(\partial_{s}w_1)^2+(\partial_{t}w_1)^2+(\partial_{\lambda} w_1)^2\right\} ds dt .
\end{eqnarray*}
We can see the last integral as an integral in the set $$\{(s,t,\lambda)\in \re^3:s^2+t^2<1, s>t\geq 0,0<\lambda <1\}\subset \re^3_+.$$ 

We consider now $w_2$ the even reflection of $w_1$  with respect to $\{t=0\}$. We set 
$$ \begin{cases}
s=z_1\\
t=|z_2|,
\end{cases}
$$
and we define $w_2(z,\lambda)=w_2(z_1,z_2,\lambda):=w_1(s,t,\lambda)$ in the Lipschitz set $$\Omega=\{(z_1,z_2,\lambda):z_1^2+z_2^2<1, z_1>|z_2|, 0<\lambda <1\}\subset \re^3_+.$$
We have that
\begin{eqnarray*}&&\int_0^1 d\lambda\int_{\{s^2+t^2<1, s>t>0\}}\left\{(\partial_{s}w_1)^2+(\partial_{t}w_1)^2+(\partial_{\lambda}w_1)^2\right\} ds dt\\
&&\hspace{2em} \leq \int_0^1 d \lambda\int_{\{z_1^2+z_2^2<1, z_1>|z_2|\}}|\nabla w_2|^2 dz_1 dz_2 .\end{eqnarray*}

Next we apply Proposition \ref{extension} and Theorem \ref{key} to the function $w_2$ in $\Omega$. Observe that $\Omega$ is Lipschitz as a subset of $\re^3$, but it is not Lipschitz at the
origin if seen as a subset of $\re^{2m+1}$. We now take $w_2$ to be the harmonic extension in $\Omega\subset \re^3$ of the boundary values given by \eqref{w-saddle}, after rescaling by $R$ and doing even reflection with respect to $t=|z_2|=0$.
Since $w_2$ is harmonic in $\Omega\subset \re^3$, Proposition \ref{extension} gives that
$$\int_{\Omega}|\nabla w_2|^2dz_1dz_2 d\lambda \leq C||w_2||^2_{H^{1/2}(\partial \Omega)}.$$
To bound the quantity $||w_2||_{H^{1/2}(\partial \Omega)}$, we apply Theorem \ref{key} with $A=\partial \Omega$ and 
$$\Gamma=\left(\left\{z_1^2+z_2^2< 1, z_1=|z_2|\right\}\times \{\lambda=0\}\right)\cup \left(\left\{z_1^2+z_2^2=1, z_1>|z_2|\right\}\times \{\lambda=0\}\right).$$
Since $|w_2|\leq 1$, we need only to check \eqref{bound_grad} in $\partial \Omega$. By the definition of $w_2$, we have that $w_2(z,0)\equiv 1$ if $\mbox{dist}(z,\Gamma)>\varepsilon$, while for $\mbox{dist}(z,\Gamma)<\varepsilon$,  $$|\nabla w_2(z_1,z_2,0)|=|\nabla w_1(s,t,0)|=\varepsilon^{-1}|\nabla g(Rx,0)|\leq C\varepsilon^{-1}=C\min\{\varepsilon^{-1},(\mbox{dist}(z,\Gamma))^{-1}\}.$$ Moreover, as in the proof of Theorem \ref{energy-est}, to verify \eqref{bound_grad} in $\partial \Omega \cap \{\lambda >0\}$ we use that $\overline w\equiv v$ here and the gradient bound \eqref{grad_CSM} for $v$. 
Thus, 
$$|\nabla w_2(z_1,z_2,\lambda)|\leq \frac{CR}{1+R\lambda}=\frac{C}{\varepsilon+\lambda}\leq C\min\left\{\frac{1}{\varepsilon},\frac{1}{\lambda}\right\}.$$
Hence,
$w_2$ satisfies the hypotheses of Theorem \ref{key} and we conclude that \eqref{rescaled} holds.
\end{proof}


\begin{thebibliography}{90}

\bibitem{AAC}
G. Alberti, L. Ambrosio and X. Cabr\' e, {\em On a long-standing
conjecture of E. De Giorgi: symmetry in 3D for general
nonlinearities and a local minimality property,} Acta Appl. Math.
{\bf 65} (2001), 9--33.

\bibitem{AC}
L. Ambrosio and X. Cabr\' e, {\em Entire solutions of semilinear
elliptic equations in $\re^3$ and a conjecture of De Giorgi,}
Journal Amer. Math. Soc. {\bf 13} (2000), 725--739.

\bibitem{C-Cinti2}
X. Cabr\' e and E. Cinti, {\em Fractional diffusion equations: energy estimates and 1-D symmetry in dimension 3}, forthcoming.

\bibitem{C-Si1}
X. Cabr\' e and Y. Sire, {\em Nonlinear equations for fractional
Laplacians I: regularity, maximum principles, and
Hamiltonian estimates}, forthcoming.

\bibitem{C-Si2}
X. Cabr\' e and Y. Sire, {\em Nonlinear equations for fractional Laplacians II: existence, uniqueness, and qualitative properties
of solutions}, forthcoming.

\bibitem{C-SM}
X. Cabr\' e and J. Sol\`a-Morales, {\em Layer solutions in a
halph-space for boundary reactions}, Comm. Pure Appl. Math.
{\bf 58} (2005), 1678--1732.

\bibitem{CT}
X. Cabr{\' e} and J. Terra, {\em Saddle-shaped solutions of bistable
diffusion equations in all of $\re^{2m}$}, J. Eur. Math. Soc. {\bf 11} (2009), 819--843.

\bibitem{CT2}
X. Cabr{\'e} and J. Terra, {\em Qualitative properties of
saddle-shaped solutions to bistable diffusion equations}, to appear
in Comm. Partial Diff. Equations, arXiv: 0907.3008.

\bibitem{CS}
L. Caffarelli and L. Silvestre, {\em An extension problem related to the
fractional Laplacian}, Comm. Part. Diff. Eq. {\bf 32} (2007),
1245--1260.
\bibitem{Cinti}
E. Cinti, {\em Bistable elliptic equations with fractional
diffusion}, Doctoral Thesis, Universitat Polit\`ecnica de Catalunya
and Universit\`a degli Studi di Bologna.

\bibitem{dPKW}
M. del Pino, M. Kowalczyk and J. Wei, {\em On De Giorgi Conjecture in dimension $N\geq 9$}, arXiv: 0806.3141.

\bibitem{GG}
N. Ghoussoub and C. Gui, {\em On a conjecture of De Giorgi and
some related problems,} Math. Ann. {\bf 311} (1998), 481--491.

\bibitem{LM}
J.L. Lions and E. Magenes, ``Non-Homogeneous Boundary Value
Problems and Applications I'', Springer-Verlag, 1972.

\bibitem{mosch}
L. Moschini, {\em New Liouville theorems for linear second order
degenerate elliptic equations in divergence form}, Ann. I. H.
Poincar\'e  {\bf 22} (2005), 11--23.

\bibitem{S}
O. Savin, {\em Phase transitions: regularity of flat level sets,}
Ann. of Math. {\bf 169} (2009), 41--78.

\bibitem {SV} Y. Sire and E. Valdinoci, {\em Fractional Laplacian
phase transitions and boundary reactions: A geometric inequality
and a symmetry result}, Jour. Functional Analysis {\bf 256} (2009), 1842--1864.



\end{thebibliography}
\end{document}